\newcommand*{\affaddr}[1]{#1} 
\newcommand*{\email}[1]{\texttt{#1}}
\theoremstyle{plain}
\newtheorem{theorem}{Theorem}[section]
\newtheorem{example}[theorem]{Example}
\newtheorem{lemma}[theorem]{Lemma}
\newtheorem{rk}{Remark}
\newcommand{\rank}{{\rm rank}}
\newcommand{\F}{\mathbb{F}}
\begin{document}

\title{Spectral and Combinatorial Properties of Some Algebraically Defined Graphs}
\author{%
Sebastian M. Cioab\u{a},
 Felix Lazebnik, and Shuying Sun\\
\affaddr{\small{Department of Mathematical Sciences,  University of Delaware, \\Newark, DE 19707-2553, USA.}}
\vskip 2mm
\email{\small{cioaba@udel.edu,fellaz@udel.edu,shuying@udel.edu}}\\
}
\date{\today}
\maketitle

\begin{abstract} Let $k\ge 3$ be an integer, $q$ be a prime power, and $\mathbb{F}_q$ denote the field of $q$ elements.
Let $f_i, g_i\in\mathbb{F}_q[X]$, $3\le i\le k$, such that $g_i(-X) = -\, g_i(X)$.
We define a graph $S(k,q) = S(k,q;f_3,g_3,\cdots,f_k,g_k)$ as a graph with the vertex set $\F_q^k$ and edges  defined as follows: vertices $a = (a_1,a_2,\ldots,a_k)$ and $b = (b_1,b_2,\ldots,b_k)$ are adjacent if $a_1\ne b_1$ and the following $k-2$ relations on their components hold:
$$
b_i-a_i = g_i(b_1-a_1)f_i\Bigl(\frac{b_2-a_2}{b_1-a_1}\Bigr)\;,\quad 3\le i\le k.
$$
We show that graphs $S(k,q)$ generalize several recently studied examples of regular expanders and can provide many new such examples.
\end{abstract}

\section{Introduction and Motivation}\label{Intro}
All graphs in this paper are simple,  i.e., undirected, with no loops and no multiple edges. See, e.g.,  Bollob\'as \cite{Bol98} for standard terminology. Let $\Gamma = (V,E)$ be a graph with vertex set $V$ and edge set $E$. For a subset of vertices $A$ of $V$, $\partial A$ denotes the set of edges of $\Gamma$ with one endpoint in $A$ and the other endpoint in $V\setminus A$. The \emph{Cheeger constant} $h(\Gamma)$ (also known as \emph{edge-isoperimetric number} or \emph{expansion ratio}) of $\Gamma$, is defined by $h(\Gamma) := \min\Bigl\{\frac{|\partial{A}|}{	|A|} : A \subseteq V, 0 < |A| \le \frac{1}{2}|V| \Bigr\}.$
The graph $\Gamma$ is $d$-regular if each vertex is adjacent to exactly $d$ others. An infinite family of \emph{expanders} is an infinite family of regular graphs whose Cheeger constants are uniformly bounded away from 0. More precisely, for $n\geq 1$, let $\Gamma_n = (V_n, E_n)$ be a sequence of graphs such that each $\Gamma_n$ is $d_n$-regular and $|V_n| \rightarrow \infty$ as $n\rightarrow\infty$. We say that the members of the sequence form a \emph{family of expanders} if the corresponding sequence $\bigl(h(\Gamma_n)\bigr)$ is bounded away from zero, i.e. there exists a real number $c > 0$ such that $h(\Gamma_n) \ge c$ for all $n\geq 1$. In general, one would like the valency sequence $(d_n)_{n\geq 1}$ to be growing slowly with $n$, and ideally, to be bounded above by a constant. For examples of families of expanders, their theory and applications, see Davidoff, Sarnak and Valette \cite{DSV}, Hoory, Linial and Wigderson \cite{HLW},  and  Krebs and Shaheen \cite{KS11}.

The adjacency matrix $A=A(\Gamma)$ of a graph $\Gamma=(V,E)$ has its rows and columns labeled by $V$ and $A(x,y)$ equals the number of edges between $x$ and $y$ (i.e. 0 or 1).
When $\Gamma$ is simple, the matrix $A$ is symmetric and therefore, its eigenvalues are real numbers. For $j$ between $1$ and the order of $\Gamma$, let $\lambda_j=\lambda_j(G)$ denote the $j$-th eigenvalue of $A$. For an arbitrary graph $\Gamma$, it is hard to find or estimate $h(\Gamma)$, and often it is done by using  the second-largest eigenvalue $\lambda_2(\Gamma)$ of the adjacency matrix of $\Gamma$. If $\Gamma$ is a connected $d$-regular graph, then
$\frac{1}{2}\big(d-\lambda_2\big) \le h(\Gamma) \le \sqrt{d^2-\lambda_2^2}$. The lower bound was proved by Dodziuk \cite{Dod84} and independently by Alon-Milman \cite{ALM85} and by Alon \cite{AL86}. In both \cite{ALM85} and \cite{AL86},  the upper bound on $h(\Gamma)$, namely  $\sqrt{2d(d-\lambda_2)}$ was provided.  Mohar in \cite{Mo89} improved the upper bound to the one above. See \cite{BH11,HLW, KS11}, for terminology and results on spectral graph theory and connections between eigenvalues and expansion properties of graphs. The difference $d- \lambda_2$ which is present is both sides of this inequality above, also known as the \emph{spectral gap} of $\Gamma$, provides an estimate on the expansion ratio of the graph. In particular, for an infinite family of $d$-regular graphs $\Gamma_n$, the sequence $\bigl(h(\Gamma_n)\bigr)$ is bounded away from zero if and only if the sequence $\bigl(d-\lambda_2(\Gamma_n)\bigr)$ is bounded away from zero. A $d$-regular connected graph $\Gamma$ is called {\it Ramanujan} if $\lambda_2(\Gamma)\le 2\sqrt{d-1}$. Alon and Boppana \cite{Nil91} proved that  this bound is asymptotically best possible for any infinite family of $d$-regular graphs and their results imply that for any infinite family of $d$-regular connected graphs $\Gamma_n$, $\lambda_2(\Gamma_n) \ge 2\sqrt{d-1} - o_n(1).$ For functions $f, g : \mathbb{N}\rightarrow \mathbb{R}^+$, we write $f = o_n(g)$ if $f(n)/g(n) \rightarrow 0$ as $n\rightarrow \infty$.

\bigskip
For the rest of the paper, let $q=p^e$, where $p$ is a prime and $e$ is a positive integer. For a sequence of prime powers $(q_m)_{m\ge 1}$, we always assume that $q_m = p_m^{e_m}$, where $p_m$ is a prime and $e_m\ge 1$. Let $\F_q$ be the finite field of $q$ elements and $\F_q^k$ be the cartesian product of $k$ copies of $\F_q$. Clearly, $\F_q^k$ is a vector space of dimension $k$ over $\F_q$.
For $2\leq i\leq k$, let $h_i$ be an arbitrary polynomial in $2i-2$ indeterminants over $\F_q$.
We define the bipartite graph
$B\Gamma_k =
B\Gamma (q;h_2,\ldots, h_k)$, $k\ge 2$,
as follows.
The vertex set of  $B\Gamma_k$ is the
disjoint union of two copies of
$\F_q^k$, one denoted by $P_k$ and the other by $L_k$.
We define edges of $B\Gamma_k$ by declaring vertices
$p=(p_1,p_2,\ldots,p_k)\in P_k$ and
$l=(l_1,l_2,\ldots,l_k)\in L_k$  to be adjacent
if  the following
$k-1$ relations on their coordinates hold:
\begin{equation}\label{Bipmain}
p_i + l_i = h_i(p_1,l_1,p_2,l_2, \ldots, p_{i-1},l_{i-1}), \;i=2,\ldots, k.
\end{equation}
The graphs $B\Gamma_k$ were introduced by Lazebnik and Woldar \cite{LW01}, as generalizations of graphs introduced by  Lazebnik and Ustimenko in \cite{LazUst} and \cite{LazUstDkq}. For surveys on these graphs and their applications, see
\cite{LW01} and Lazebnik, Sun and  Wang \cite{LaSuWa17}. An important  basic
property of graphs $B\Gamma_k$ (see  \cite{LW01}) is that
for every vertex $v$ of $B\Gamma_k$ and every
$\alpha \in \F_q$,  there exists a unique
neighbor of $v$  whose first coordinate is $\alpha$. This implies that each $B\Gamma_k$ is $q$-regular, has $2q^k$ vertices and $q^{k+1}$ edges.

The spectral and combinatorial properties of three specializations of graphs $B\Gamma_k$ has received particular attention in recent years. Cioab\u{a}, Lazebnik and Li \cite{CiLaLi14} determined the complete spectrum of the {\it Wenger} graphs $W_{k}(q) = B\Gamma (q;h_2,\ldots, h_{k+1})$  with $h_i= p_1l_1^{i-1}$, $2\le i \le k+1$. Cao, Lu, Wan, Wang and  Wang  \cite{Liping15} determined the eigenvalues of the {\it linearized Wenger} graphs $L_{k}(q) = B\Gamma (q;h_2,\ldots, h_{k+1})$ with $h_i= p_1^{p^{i-2}}l_1$, $2\le i \le k+1$, and Yan and Liu \cite{YanLiu17} determined the multiplicities of the eigenvalues the linearized Wenger graphs. Moorhouse, Sun and Williford \cite{MoorSunWil16} studied the spectra  of  graphs $D(4,q) = B\Gamma (q; p_1l_1, p_1l_2, p_2l_1)$, and in particular, proved that  the second largest eigenvalues of these graphs are bounded from above by $2\sqrt{q}$ (so $D(4,q)$ is `close' to being Ramanujan).
\bigskip

Let $V_1$ and $V_2$  denote the partite sets or color classes of the vertex set of a bipartite graph $\Gamma$. The {\em distance-two graph of $\Gamma$ on $V_1$} is the graph having $V_1$ as its vertex set with the adjacency defined as follows: two vertices $x\neq y\in V_1$ are adjacent if there exists a vertex $z\in V_2$ adjacent to both $x$ and to $y$ in $\Gamma$ (which is equivalent of saying that $x$ and $y$ are at distance two in $\Gamma$). If $\Gamma$ is $d$-regular and contains no 4-cycles, then $\Gamma^{(2)}$ is a $d(d-1)$-regular simple graph. There is simple connection between the eigenvalues of $\Gamma$ and the eigenvalues of $\Gamma^{(2)}$ (see, e.g., \cite{CiLaLi14}):  every eigenvalue $\lambda$ of $\Gamma^{(2)}$  with multiplicity $m$, corresponds to a pair of eigenvalues $\pm\sqrt{\lambda + d}$ of $\Gamma$, each with multiplicity $m$ (or a single eigenvalue $0$ of multiplicity $2m$ in case $\lambda = -d)$.

This relation between the spectra of $q$-regular bipartite graph $\Gamma$ and its $q(q-1)$-regular distance-two graph $\Gamma^{(2)}$ has been utilized in each of the papers \cite{CiLaLi14,Liping15,MoorSunWil16} in order to find or to bound  the second-largest eigenvalue of $\Gamma$, and then use this information  to claim the  expansion property of $\Gamma$. In each of these cases, $\Gamma^{(2)}$  turned out to be a Cayley graph of a group, that allowed to use representation theory to compute its spectrum. In  \cite{CiLaLi14,Liping15} the group turned out be abelian, as in \cite{MoorSunWil16} it was not for odd $q$.

The main motivation behind the construction below is to directly generalize the defining systems  of equations for $W_k^{(2)}(q)$ and of $L_k^{(2)}(q)$, thereby obtaining a family of $q(q-1)$-regular Cayley graphs of an abelian group. The adverb {\em directly} used in the previous sentence was to stress that the graphs we build are not necessarily distance-two graphs of $q$-regular bipartite graphs~$\Gamma$. Examples when they are not will be discussed in Remark \ref{NonBip} of Section \ref{CONREM}.
\bigskip

\section{Main Results}\label{MR}

In this section, we define the main object of this paper, the family of graphs $S(k,q)$ and we describe our main results. Let $k$ be an integer, $k\ge 3$.
Let $f_i, g_i\in\mathbb{F}_q[X]$, $3\le i\le k$,  be $2(k-2)$ polynomials of degrees at most $q-1$ such that $g_i(-X) = -\,g_i(X)$ for each $i$.
We define $S(k,q) = S(k,q; f_3,g_3,\cdots,f_k,g_k)$ as the graph with the vertex set $\F_q^k$ and edges  defined as follows: $a = (a_1,a_2,\ldots,a_k)$ is adjacent to $b = (b_1,b_2,\ldots,b_k)$ if $a_1\ne b_1$ and the following $k-2$ relations on their coordinates hold:
\begin{equation}\label{S2SS1:relation}
b_i-a_i = g_i(b_1-a_1)f_i\Bigl(\frac{b_2-a_2}{b_1-a_1}\Bigr)\;,\quad 3\le i\le k.
\end{equation}

Clearly,  the requirement $g_i(-X)= -\, g_i(X)$ is used for the definition of the adjacency in $S(k,q)$ to be symmetric.
One can easily see that $S(k,q)$ is a Cayley graph
with the {\it underlying group} $G$ being  the additive group of the vector space $\F_q^k$ with {\it generating set}
$$
\left\{ \bigl(a, au, g_3(a)f_3(u), \cdots, g_k(a)f_k(u)\bigr) \ |\ a\in\F_q^*, u\in\F_q\right\}.
$$
This implies that $S(k,q)$ is vertex transitive of degree $q(q-1)$.

Note that for $f_i= X^{i-1}$ and $g_i= X$, $3\le i\le k+1$, $S(k+1,q)= W^{(2)}_k(q)$ is the distance-two graph of the Wenger graphs $W_k(q)$ on lines and for  $f_i=X^{p^{i-2}}$ and $g_i= X$, $3\le i\le k+1$, $S(k+1,q)= L^{(2)}_k(q)$ is the distance-two graph of the linearized Wenger graphs $L_k(q)$ on lines.
\bigskip

In order to present our results, we need a few more notation.  For any $\alpha\in\F_q$, let $Tr(\alpha) = \alpha+ \alpha^p + \cdots +\alpha^{p^{e-1}}$ be the {\em trace of $\alpha$ over $\F_p$}. It is known that $Tr(\alpha)\in \F_p$. For any element $\beta\in \F_p$, let $\beta^*$ denote the unique integer such that $0\le \beta^*<p$ and the residue class of $\beta^*$ in $\F_p$ is $\beta$.   For any complex number $c$, the expression $c^\beta$ will mean  $c^{\beta^*}$.
Let $\zeta_p =\exp(\frac{2\pi}{p}i)$ be a complex $p$-th root of unity.
For every $f\in\F_q[X]$, we call $\varepsilon_{f} = \sum\limits_{x\in\F_q} \zeta_p^{\ Tr\bigl(f(x)\bigr)}$ the {\em exponential sum of $f$}.

We are ready to state the main results of this paper.

The following theorem describes the spectrum of the graphs $S(k,q)$.
\begin{theorem}\label{a}
Let $k\ge 3$.
Then the spectrum of $S(k,q)$ is the multiset $\{ \lambda_w\  |\  w = (w_1,\cdots, w_k) \in\mathbb{F}_q^k\}$, where
\begin{equation}\label{S2SS4:eignoclosed}
\lambda_w = \sum\limits_{a\in\mathbb{F}_q^*,u\in\mathbb{F}_q} \zeta_p^{Tr\Big(aw_1+auw_2+\sum\limits_{i=3}^k g_i(a)f_k(u)w_k\Big)}.
\end{equation}
\end{theorem}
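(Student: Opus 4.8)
The plan is to exploit the observation made just above that $S(k,q)$ is the Cayley graph of the abelian group $G = (\F_q^k,+)$ with connection set
\[
S = \bigl\{\, \sigma(a,u) := \bigl(a,\, au,\, g_3(a)f_3(u),\, \ldots,\, g_k(a)f_k(u)\bigr)\ :\ a\in\F_q^*,\ u\in\F_q \,\bigr\},
\]
and then to apply the standard description of the spectrum of a Cayley graph over a finite abelian group in terms of its characters.

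First I would record the elementary properties of $S$ that are needed. No element of $S$ has first coordinate $0$, so $0\notin S$ and the Cayley graph has no loops. The parametrization $\sigma\colon \F_q^*\times\F_q\to S$ is injective, since the first coordinate of $\sigma(a,u)$ recovers $a\in\F_q^*$ and the second then recovers $u = (au)\,a^{-1}$; hence $|S| = q(q-1)$ and $S(k,q)$ is a simple $q(q-1)$-regular graph, as already claimed. Moreover the hypothesis $g_i(-X) = -\,g_i(X)$ gives $-\sigma(a,u) = \sigma(-a,u)$ (indeed $-au = (-a)u$ and $-g_i(a)f_i(u) = g_i(-a)f_i(u)$), so $S = -S$; this is precisely what makes the adjacency relation \eqref{S2SS1:relation} symmetric, and it will also account for the (a priori complex) numbers $\lambda_w$ below being real.

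Next I would invoke character theory. The characters of $G = \F_q^k$ are $\chi_w(x) = \zeta_p^{\,Tr(w_1 x_1 + \cdots + w_k x_k)}$, indexed by $w = (w_1,\ldots,w_k)\in\F_q^k$, and the $q^k$ vectors $v_w = \bigl(\chi_w(x)\bigr)_{x\in G}$ are pairwise orthogonal, hence form a basis of $\mathbb{C}^G$. Writing $A$ for the adjacency matrix of $S(k,q)$, so that (using $S = -S$) $A(x,y) = 1$ exactly when $x - y \in S$, one has
\[
(A v_w)_x \;=\; \sum_{y\in G} A(x,y)\,\chi_w(y) \;=\; \sum_{s\in S}\chi_w(x-s) \;=\; \Bigl(\sum_{s\in S}\chi_w(-s)\Bigr)\chi_w(x) \;=\; \Bigl(\sum_{s\in S}\chi_w(s)\Bigr)\chi_w(x),
\]
the last step again by $S = -S$. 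Thus $v_w$ is an eigenvector with eigenvalue $\lambda_w = \sum_{s\in S}\chi_w(s)$, and since the $v_w$ form a basis, the spectrum of $S(k,q)$ is exactly the multiset $\{\lambda_w : w\in\F_q^k\}$.

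Finally I would substitute $s = \sigma(a,u)$ and use the bijectivity of $\sigma$ to obtain
\[
\lambda_w \;=\; \sum_{a\in\F_q^*,\,u\in\F_q}\zeta_p^{\,Tr\bigl(a w_1 + a u w_2 + \sum_{i=3}^{k} g_i(a) f_i(u) w_i\bigr)},
\]
which is \eqref{S2SS4:eignoclosed}. I do not expect any genuine obstacle: the argument is essentially bookkeeping around the Cayley-graph/character correspondence. The only two points that deserve care are the injectivity of $\sigma$ — so that $S$ is a bona fide set and the valency is exactly $q(q-1)$ rather than something smaller — and the use of the oddness of the polynomials $g_i$ to guarantee $S = -S$, without which neither the graph nor its adjacency matrix would be well defined.
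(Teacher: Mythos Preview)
Your proof is correct and follows essentially the same approach as the paper: both identify $S(k,q)$ as a Cayley graph of the abelian group $(\F_q^k,+)$ and read off the eigenvalues from its characters. The only difference is that the paper quotes Babai's theorem on spectra of Cayley graphs as a black box, whereas you give the direct elementary verification (and supply the auxiliary details $0\notin S$, $S=-S$, and injectivity of $\sigma$); the underlying idea is identical.
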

For a fixed $k\geq 3$, the theorem below provides sufficient conditions for the graphs $S(k,q)$ to form a family of expanders.
\begin{theorem} \label{aa}
Let $k\ge 3$, $(q_m)_{m\geq 1}$ be an increasing
 sequence of prime powers, 
and let
$$S(k,q_m) = S(k,q_m; f_{3,m}, g_{3,m},\cdots,f_{k,m},g_{k,m}).$$
Set $d_f^{(m)}=\max\limits_{3\le i\le k} {\rm deg}(f_{i,m})$ and $d_g^{(m)} = \max\limits_{3\le i\le k} {\rm deg}(g_{i,m})$.
Suppose $1\le d_f^{(m)} = o_m(q_m)$, $d_g^{(m)} = o_m(\sqrt{q_m})$, $1\le d_g^{(m)}< p_m$, and for all $m\ge 1$, at least one of the following two conditions   
is satisfied:
\begin{enumerate}
\item The polynomials $1,X,f_{3,m},\ldots,f_{k,m}$ are $\F_q$-linearly independent, and $g_{i,m}$ has linear term for all $i$, $3\le i\le k$.
\item The polynomials $f_{3,m},\ldots,f_{k,m}$ are $\F_q$-linearly independent, and there exists some $j$, $2\le j\le d_g^{(m)}$, such that each polynomial $g_{i,m}$, $3\le i\le k$, contains a term $c_{i,j}^{(m)}X^j$ with $c_{i,j}\ne 0$.
\end{enumerate}
Then $S(k,q_m)$ is connected and $\lambda_2\bigl(S(k,q_m)\bigr) = o_m(q_m^2)$.
\end{theorem}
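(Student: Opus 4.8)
The plan is to combine the eigenvalue formula of Theorem~\ref{a} with a one-variable Weil bound. Throughout, fix $m$ and suppress it from the notation, writing $q,p,f_i,g_i,d_f,d_g$ for $q_m,p_m,f_{i,m},g_{i,m},d_f^{(m)},d_g^{(m)}$. By Theorem~\ref{a} the spectrum of $S(k,q)$ is $\{\lambda_w:w\in\F_q^k\}$ with $\lambda_0=q(q-1)$ the valency, so it is enough to prove (a) $|\lambda_w|=o_m(q^2)$ for every $w\neq 0$, and (b) $\lambda_w\neq q(q-1)$ for every $w\neq 0$; granting these, $q(q-1)$ is a simple eigenvalue and $\lambda_2(S(k,q))=\max_{w\neq 0}\lambda_w\le\max_{w\neq 0}|\lambda_w|=o_m(q^2)$. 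The first step is to reorganize the double sum in \eqref{S2SS4:eignoclosed} so that the summation over $a$ is innermost: for $w\in\F_q^k$ and $u\in\F_q$ put $G_u(a):=(w_1+uw_2)\,a+\sum_{i=3}^k\bigl(w_if_i(u)\bigr)g_i(a)\in\F_q[a]$, a polynomial in $a$ with $u$-dependent coefficients, of degree in $a$ at most $\max(1,d_g)=d_g$; since $g_i(-X)=-g_i(X)$ we have $g_i(0)=0$, hence $G_u(0)=0$, and $\lambda_w=\sum_{u\in\F_q}\bigl(\varepsilon_{G_u}-1\bigr)$. Expanding $g_i(a)=\sum_{j\ge1}c_{i,j}a^j$, the coefficient of $a$ in $G_u$ is $P(u):=w_1+uw_2+\sum_{i=3}^k w_ic_{i,1}f_i(u)$ and, for $j\ge 2$, the coefficient of $a^j$ is $Q_j(u):=\sum_{i=3}^k w_ic_{i,j}f_i(u)$; each of these is a polynomial in $u$ of degree at most $d_f$.

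Next I would estimate the inner sums and set $B:=\{u\in\F_q:G_u\equiv 0\}$. For $u\notin B$, the polynomial $G_u$ is nonzero and vanishes at $0$, hence nonconstant, with $1\le\deg_a G_u\le d_g<p$; the Weil bound then gives $|\varepsilon_{G_u}|\le(d_g-1)\sqrt q$, so $|\varepsilon_{G_u}-1|\le d_g\sqrt q$. For $u\in B$ we have $\varepsilon_{G_u}-1=q-1$. Hence $|\lambda_w|\le |B|(q-1)+(q-|B|)\,d_g\sqrt q\le |B|\,q+d_g\,q^{3/2}$. Since $d_g=o_m(\sqrt q)$, the term $d_g q^{3/2}$ is $o_m(q^2)$, so (a) is reduced to showing $|B|\le d_f$ (which is $o_m(q)$) whenever $w\neq 0$, apart from one degenerate family of $w$ handled by direct computation.

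Bounding $|B|$ is the crux, and the place where hypotheses (1) and (2) are used; this is the step I expect to be the main obstacle. Since $B\subseteq\{u:P(u)=0\}$ and $B\subseteq\{u:Q_j(u)=0\}$ for every $j$, it suffices to exhibit, for each $w\neq 0$, one of these coefficient polynomials that is not identically zero in $u$. Under~(1): if $P\equiv 0$, then $\F_q$-linear independence of $1,X,f_3,\dots,f_k$ forces $w_1=w_2=0$ and $w_ic_{i,1}=0$ for all $i$, and since each $g_i$ has a linear term $(c_{i,1}\neq 0)$ this gives $w=0$; hence for $w\neq 0$ one has $P\not\equiv 0$ and $|B|\le\deg P\le d_f$. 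Under~(2): fix $j$ as in the hypothesis. If $Q_j\not\equiv 0$, then $|B|\le\deg Q_j\le d_f$. If $Q_j\equiv 0$, then $\F_q$-linear independence of $f_3,\dots,f_k$ together with $c_{i,j}\neq 0$ forces $w_3=\dots=w_k=0$, so $w=(w_1,w_2,0,\dots,0)\neq 0$; then $G_u(a)=(w_1+uw_2)a$, and using that $\sum_{a\in\F_q^*}\zeta_p^{Tr(at)}=q-1$ if $t=0$ and $-1$ otherwise, one computes $\lambda_w=0$ when $w_2\neq 0$ and $\lambda_w=-q$ when $w_2=0$. In every case $|\lambda_w|\le d_f q+d_g q^{3/2}=o_m(q^2)$, giving (a).

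Finally (b): if $\lambda_w=q(q-1)$, then, since there are $q$ summands each with $|\varepsilon_{G_u}-1|\le q-1$, equality in the triangle inequality forces $\varepsilon_{G_u}-1=q-1$, i.e.\ $Tr(G_u(a))=0$ for all $a\in\F_q$, for every $u$. If $G_u$ were nonconstant for some $u$, then $1\le\deg_a G_u\le d_g<p$ and the Weil bound would give $q=|\varepsilon_{G_u}|\le(d_g-1)\sqrt q$, forcing $q=p$ and hence $G_u\equiv 0$ (a polynomial of degree $<p$ with $p$ roots), contradicting nonconstancy; so $G_u$ is constant, hence $\equiv 0$ since $G_u(0)=0$. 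Thus $B=\F_q$. But $\deg f_i\le q-1$ gives $d_f<q$, so in the non-degenerate cases $|B|\le d_f<q$, a contradiction, while in the degenerate sub-case of~(2) we found $\lambda_w\in\{0,-q\}\neq q(q-1)$ directly. Either way $\lambda_w\neq q(q-1)$ for $w\neq 0$, so $S(k,q_m)$ is connected and $\lambda_2\bigl(S(k,q_m)\bigr)=o_m(q_m^2)$.
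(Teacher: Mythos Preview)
Your proof is correct and follows essentially the same approach as the paper, which derives Theorem~\ref{aa} as an immediate corollary of Lemma~\ref{lem}, Theorem~\ref{S2SS4:largegap}, and Theorem~\ref{S2SS4:connectedness}, all resting on the same decomposition $\lambda_w=\sum_{u}(\varepsilon_{G_u}-1)$, the Weil bound on the inner sums, and the linear-independence hypotheses to control the set where $G_u\equiv 0$. The only differences are organizational: the paper isolates a third case (where $P(u)\neq 0$ but all $Q_j(u)=0$, giving inner sum exactly $-1$) for a slightly sharper intermediate bound, and establishes connectivity via a subspace/rank computation rather than your direct equality-in-the-triangle-inequality argument.
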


The following two theorems demonstrate that for some specializations of $S(k,q)$, we can obtain stronger upper bounds on their second largest eigenvalues.

\begin{theorem}\label{aaaa}
Let $q$ be an odd prime power with $q \equiv 2\mod 3$, and $4\le k\le q+1$. Let $g_i(X) = X^3$ and $f_i(X) = X^{i-1}$ for each $i$, $3\le i\le k$. Then $S(k,q)$ is connected, and
$$
\lambda_2\bigl(S(k,q)\bigr) =\max\bigl\{q(k-3),(q-1)M_q\bigr\},
$$
where $M_q = \max\limits_{a,b\in\F_q^*}\varepsilon_{ax^3+bx}\le 2\sqrt{q}$.
\end{theorem}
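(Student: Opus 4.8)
The plan is to specialize Theorem~\ref{a} to $g_i(X)=X^3$, $f_i(X)=X^{i-1}$ and then analyze the resulting eigenvalues directly. Writing $P_w(X)=\sum_{i=3}^{k}w_iX^{i-1}=X^2\widetilde P_w(X)$ with $\widetilde P_w(X)=\sum_{i=3}^{k}w_iX^{i-3}$ of degree at most $k-3$, the eigenvalue of Theorem~\ref{a} attached to $w=(w_1,\dots,w_k)\in\F_q^k$ is
\[
\lambda_w=\sum_{u\in\F_q}\;\sum_{a\in\F_q^*}\zeta_p^{Tr\bigl((w_1+uw_2)a+P_w(u)a^3\bigr)} .
\]
First I would evaluate, for each fixed $u$, the inner sum over $a$. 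Since $q\equiv2\pmod3$ we have $\gcd(3,q-1)=1$, so $a\mapsto a^3$ permutes $\F_q$; completing each inner sum to a sum over $\F_q$ and subtracting the $a=0$ term, it equals $q-1$ when $P_w(u)=0=w_1+uw_2$, it equals $-1$ when exactly one of $P_w(u),\,w_1+uw_2$ vanishes (orthogonality of additive characters, plus bijectivity of cubing when $w_1+uw_2=0\ne P_w(u)$), and it equals $\varepsilon_{P_w(u)x^3+(w_1+uw_2)x}-1$ when both are nonzero. Setting $N_w=\#\{u\in\F_q:P_w(u)=0=w_1+uw_2\}$ and $U_w=\{u\in\F_q:P_w(u)\ne0\ne w_1+uw_2\}$ and summing over $u$, the four contributions collapse to
\[
\lambda_w=q\,(N_w-1)+\sum_{u\in U_w}\varepsilon_{P_w(u)x^3+(w_1+uw_2)x}.
\]
For $u\in U_w$ both coefficients lie in $\F_q^*$, so each summand is at most $M_q$; and since $P_w(0)=0$ we have $0\notin U_w$, hence $|U_w|\le q-1$. (That $M_q$ is real and positive is easy, e.g.\ from $\sum_{b\in\F_q^*}\varepsilon_{ax^3+bx}=q$.)

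For the upper bound on $\lambda_2$ I would split on $w_2$, for $w\ne0$. If $w_2\ne0$, then $w_1+uw_2=0$ has a unique solution, so $N_w\le1$ and $\lambda_w\le|U_w|M_q\le(q-1)M_q$. If $w_2=0$ and $w_1\ne0$, then $N_w=0$, so $\lambda_w\le-q+(q-1)M_q<(q-1)M_q$. If $w_1=w_2=0$, then $w\ne0$ forces $\widetilde P_w\not\equiv0$, while $w_1+uw_2\equiv0$; hence $U_w=\emptyset$ and $N_w$ is the number of distinct zeros of $P_w=X^2\widetilde P_w$ in $\F_q$, at most $1+(k-3)=k-2$, so $\lambda_w\le q(k-3)$. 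Since $S(k,q)$ is connected (shown below), Perron--Frobenius makes the degree $q(q-1)$ a simple eigenvalue, so $\lambda_2\bigl(S(k,q)\bigr)\le\max\{q(k-3),(q-1)M_q\}$.

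It remains to realize both values as eigenvalues. Picking distinct $r_1,\dots,r_{k-3}\in\F_q^*$ (possible since $k-3\le q-2$) and $w=(0,0,w_3,\dots,w_k)$ with $\widetilde P_w(X)=\prod_{j=1}^{k-3}(X-r_j)$, we get that $P_w$ has the $k-2$ distinct zeros $0,r_1,\dots,r_{k-3}$, so $N_w=k-2$ and $\lambda_w=q(k-3)$. For the other value, fix $a_0,b_0\in\F_q^*$ with $\varepsilon_{a_0x^3+b_0x}=M_q$, set $\mu=a_0b_0^{-3}$, and take $w=(0,w_2,0,\mu w_2^3,0,\dots,0)$ with $w_2\in\F_q^*$; this is where $k\ge4$ is used. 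Then $P_w(X)=\mu w_2^3X^3$, so $N_w=1$, $U_w=\F_q^*$, and for every $u\in U_w$ the pair $(P_w(u),\,w_1+uw_2)=(\mu(w_2u)^3,\,w_2u)$ is the image of $(a_0,b_0)$ under the substitution $x\mapsto(w_2u/b_0)\,x$, which sends $(a,b)\mapsto(at^3,bt)$ and hence preserves both $ab^{-3}$ and the value $\varepsilon_{ax^3+bx}$; thus every summand equals $M_q$ and $\lambda_w=(q-1)M_q$. Connectivity of $S(k,q)$ follows from Theorem~\ref{aa}, applied to any increasing sequence of odd prime powers, each congruent to $2$ modulo $3$, that contains $q$: the degrees $k-1$ and $3$ are bounded, $3<p$ since $p\ge5$, and its condition~(2) holds with $j=3$ because the $f_i=X^{i-1}$ are $\F_q$-linearly independent and each $g_i=X^3$ carries the term $X^3$. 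Finally, $M_q\le2\sqrt q$ is Weil's bound for the degree-$3$ polynomial $ax^3+bx$, valid as $p\nmid3$. Combining everything gives $\lambda_2\bigl(S(k,q)\bigr)=\max\{q(k-3),(q-1)M_q\}$.

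The main obstacle, I expect, is obtaining the clean formula $\lambda_w=q(N_w-1)+\sum_{u\in U_w}\varepsilon_{P_w(u)x^3+(w_1+uw_2)x}$ together with the bound $|U_w|\le q-1$: this bound is exactly what prevents the two regimes $q(k-3)$ and $(q-1)M_q$ from compounding, forcing the answer to be a maximum rather than a sum. The other non-routine point is spotting the family $w=(0,w_2,0,\mu w_2^3,0,\dots,0)$, which makes all $q-1$ cubic exponential sums occurring in $\lambda_w$ equal $M_q$ at once.
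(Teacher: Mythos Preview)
Your proof is correct and follows essentially the same strategy as the paper, with a somewhat cleaner organization. The paper first proves a general lemma for $g_i(X)=X^3$ (Lemma~\ref{cubelemma}, stated in terms of $T_w=\#\{u:P_w(u)=0\}$) via a three-case analysis on $(w_1,w_2)$, then specializes to $f_i(X)=X^{i-1}$; your single identity $\lambda_w=q(N_w-1)+\sum_{u\in U_w}\varepsilon_{P_w(u)x^3+(w_1+uw_2)x}$ together with $|U_w|\le q-1$ (from $P_w(0)=0$) packages that same case analysis more compactly and makes the upper bound immediate. Two small differences are worth noting: you explicitly construct a $w$ with $\lambda_w=q(k-3)$, which the paper leaves implicit in its Case~1 computation; conversely, for the value $(q-1)M_q$ the paper uses the slicker global substitution $x=au$ to obtain $\lambda_{(0,w_2,0,w_4,0,\dots,0)}=(q-1)\varepsilon_{w_4x^3+w_2x}$ directly, while you reach the same conclusion through your formula and the scaling $x\mapsto tx$. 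Finally, for connectivity the paper invokes the single-$q$ Theorem~\ref{S2SS4:connectedness} rather than Theorem~\ref{aa}, which would spare you the (harmless) detour of embedding $q$ in a sequence.
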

For large $k$, specifically, when $(q-1)M_q \le q(k-3)$,
$$ \lambda_2\bigl(S(k,q)\bigr) = q(k-3) < q(k-2) = \lambda_2\bigl(W^{(2)}_{k-1}(q)\bigr).$$

Similarly to Theorem \ref{aaaa}, when choosing $f_i(X) = X^{p^{i-2}}$, the same $f$ functions as in $L^{(2)}_k(q)$, we obtain the following upper bounds for the second largest eigenvalue.
\begin{theorem}\label{aaaaa}
Let $q$ be an odd prime power with $q \equiv 2\mod 3$, and $3\le k\le e+2$. Let $g_i(X) = X^3$ and $f_i(X) = X^{p^{i-2}}$ for each $i$, $3\le i\le k$. Then $S(k,q)$ is connected, and
$$
\lambda_2\bigl(S(k,q)\bigr) \le \max\bigl\{q(p^{k-3}-1),(q-1)M_q\bigr\},
$$
where $M_q = \max\limits_{a,b\in\F_q^*}\varepsilon_{ax^3+bx}\le 2\sqrt{q}$.
\end{theorem}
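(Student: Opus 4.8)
The plan is to apply the spectral formula of Theorem~\ref{a} with $g_i(X)=X^3$ and $f_i(X)=X^{p^{i-2}}$, so that for $w=(w_1,\dots,w_k)\in\F_q^k$,
$$
\lambda_w=\sum_{a\in\F_q^*,\ u\in\F_q}\zeta_p^{Tr\bigl(aw_1+auw_2+a^3L_w(u)\bigr)},\qquad L_w(u):=\sum_{i=3}^{k}w_i\,u^{p^{i-2}}.
$$
The crucial choice is to carry out the inner sum over $a$ \emph{first}: for fixed $u$, $\sum_{a\in\F_q^*}\zeta_p^{Tr(a^3c+ab)}=\varepsilon_{cx^3+bx}-1$ with $c=L_w(u)$ and $b=w_1+uw_2$, whence
$$
\lambda_w=\Bigl(\sum_{u\in\F_q}\varepsilon_{L_w(u)x^3+(w_1+uw_2)x}\Bigr)-q.
$$
This ordering is what makes $M_q$ appear; summing over $u$ first would instead collapse the formula (because $L_w$ is additive) into counting zeros of an auxiliary polynomial of degree about $3p^{e-1}$, far too coarse.

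Next I would record a few elementary ingredients: (i) $q\equiv2\bmod3$ makes $x\mapsto x^3$ a bijection of $\F_q$, so $\varepsilon_{cx^3}=0$ for $c\in\F_q^*$, while $\varepsilon_{bx}$ equals $q$ if $b=0$ and $0$ otherwise; (ii) for $c,b\in\F_q^*$, $\varepsilon_{cx^3+bx}\le M_q$ simply by definition of $M_q$ — and since we only need an \emph{upper} bound on $\lambda_2$, this one-sided estimate suffices and the trailing $-q$ can only help; (iii) if $L_w\not\equiv0$, its kernel $K:=\{u\in\F_q:L_w(u)=0\}$ satisfies $1\le|K|\le p^{k-3}$, because $L_w(u)=M_w(u^p)$ with $M_w(Y)=\sum_{i=3}^{k}w_iY^{p^{i-3}}$ of degree at most $p^{k-3}<q$ (here $k\le e+2$ is used) and $u\mapsto u^p$ is a bijection, while $0\in K$. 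Then I split the $u$-sum along $K$: for $u\in K$ the summand is $\varepsilon_{(w_1+uw_2)x}\in\{0,q\}$, and for $u\notin K$ it is $\varepsilon_{L_w(u)x^3+(w_1+uw_2)x}$, which vanishes when $w_1+uw_2=0$ and is $\le M_q$ otherwise.

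The case analysis on $w\ne0$ is then short. If $w_1=w_2=0$ (which forces $L_w\not\equiv0$), every $u$ satisfies $w_1+uw_2=0$, so only the $|K|$ terms with $u\in K$ survive, each equal to $q$, giving $\lambda_w=q(|K|-1)\le q(p^{k-3}-1)$. Otherwise $w_1+uw_2=0$ has at most one solution $u^*$; tallying the surviving terms, the $u\in K$ terms contribute $q$ if $u^*\in K$ and $0$ otherwise, and the $u\notin K$ terms contribute at most $G\cdot M_q$, where $G\le q-|K|$ is the number of $u\notin K$ with $w_1+uw_2\ne0$. In the first case the $q$ cancels the $-q$, in the second the $-q$ only lowers the bound, so in all cases $\lambda_w\le G\cdot M_q\le(q-|K|)M_q\le(q-1)M_q$, using $|K|\ge1$ and $M_q>0$ (the positivity coming from $\sum_{b\in\F_q^*}\varepsilon_{x^3+bx}=q>0$); the degenerate situation $L_w\equiv0$ is subsumed here and yields $\lambda_w\in\{0,-q\}$. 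Taking the maximum over $w\ne0$ gives $\lambda_2\bigl(S(k,q)\bigr)\le\max\{q(p^{k-3}-1),(q-1)M_q\}$; since $p^{k-3}\le p^{e-1}<q$ and $(q-1)M_q\le2(q-1)\sqrt q<q(q-1)$ for $q\ge5$, this is strictly below the degree $q(q-1)$, so that eigenvalue is simple and $S(k,q)$ is connected. The conceptual crux is the choice to sum over $a$ before $u$ together with the observation that a one-sided bound $\varepsilon_{cx^3+bx}\le M_q$ is all that is required; the only mildly technical point is then the bookkeeping around $u^*$ so that the $-q$ is absorbed and the final constant is $q-1$ rather than something larger.
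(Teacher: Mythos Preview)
Your argument is correct and follows essentially the same route as the paper: the paper packages your inner-sum-over-$a$ computation and the ensuing case split on $(w_1,w_2)$ as Lemma~\ref{cubelemma} (with your $|K|$ called $T_w$), and then invokes it together with the bound $T_w\le p^{k-3}$ coming from the substitution $Y=X^p$, exactly as you do. The one organizational difference is connectivity: the paper cites the general criterion Theorem~\ref{S2SS4:connectedness} (linear independence of $X^p,\ldots,X^{p^{k-2}}$), whereas you deduce it directly from $\lambda_2<q(q-1)$; both are valid, and your route is self-contained for this particular theorem.
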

For large $k$, specifically, when $(q-1)M_q \le q(p^{k-3}-1)$,
$$
\lambda_2\bigl(S(k,q)\bigr) = q(p^{k-3}-1) < q(p^{k-2}-1) = \lambda_2\bigl(L^{(2)}_{k-1}(q)\bigr).
$$

The paper is organized as follows.  In Section \ref{FF}, we present necessary definitions and results concerning finite fields used in the proofs. In Section \ref{EIGEN}, we prove Theorem~\ref{a}. In Section \ref{CONEXP}, we study some sufficient conditions on $f_i$ and $g_i$ for the graph $S(k,q)$ to be connected and have large eigenvalue gap, and prove Theorem \ref{aa}. In Section \ref{ODDEXP}, we prove Theorem \ref{aaaa} and Theorem~\ref{aaaaa}. We conclude the paper with several remarks in Section \ref{CONREM}.

\section{Background on finite fields}\label{FF}

For  definitions and theory of finite fields, see Lidl and Niederreiter \cite{LN97}.
\begin{lemma}[\cite{LN97}, Ch.5]\label{exp_sums}
If $f(X)=bX+c\in \F_q[X]$ is a polynomial of degree one or less, then
\[\varepsilon_f=\left\{\begin{array}{ll}
0,&\hbox{if $b\neq0$,}\\
q\zeta^{Tr(c)},&\hbox{otherwise.}\end{array}\right.\]
\end{lemma}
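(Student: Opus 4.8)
The plan is to evaluate the exponential sum $\varepsilon_f = \sum_{x\in\F_q}\zeta_p^{Tr(f(x))}$ directly, splitting into the two cases according to whether the leading coefficient $b$ vanishes, and in the nontrivial case reducing to the standard fact that a nontrivial additive character sums to zero over $\F_q$.

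First I would dispose of the case $b=0$. Here $f(x)=c$ is constant, so every one of the $q$ summands equals $\zeta_p^{Tr(c)}$, giving $\varepsilon_f = q\zeta_p^{Tr(c)}$ immediately.

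The substance is the case $b\neq 0$. I would observe that the affine map $x\mapsto bx+c$ is a bijection of $\F_q$ onto itself, its inverse being $y\mapsto b^{-1}(y-c)$. Reindexing the sum by $y=bx+c$ therefore gives $\varepsilon_f = \sum_{y\in\F_q}\zeta_p^{Tr(y)}$, an expression no longer depending on $b$ or $c$. It then remains to show this sum is zero. I would use that $Tr\colon\F_q\to\F_p$ is an $\F_p$-linear surjection, so each fiber $Tr^{-1}(t)$ with $t\in\F_p$ has exactly $q/p$ elements; grouping the sum over these fibers yields $\varepsilon_f = (q/p)\sum_{t\in\F_p}\zeta_p^{t}$. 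The inner sum is the sum of all $p$-th roots of unity, which is $0$ since $\zeta_p\neq 1$ and $\sum_{t=0}^{p-1}\zeta_p^{t}=(\zeta_p^{p}-1)/(\zeta_p-1)=0$. Hence $\varepsilon_f=0$, completing the case and the lemma.

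The only genuine obstacle is justifying that $Tr$ is surjective, equivalently that $y\mapsto\zeta_p^{Tr(y)}$ is a nontrivial character; the vanishing of the character sum rests entirely on this. This is a standard property of finite fields: the trace is a nonzero $\F_p$-linear functional (the trace form is nondegenerate), so its image is all of $\F_p$ and its fibers are equinumerous. Everything else is the elementary bijection and geometric-series computation indicated above.
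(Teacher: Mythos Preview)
Your proof is correct. Note, however, that the paper does not give its own proof of this lemma: it is simply quoted from Lidl and Niederreiter \cite{LN97}, Chapter 5, as a standard background fact about exponential sums over finite fields. So there is no argument in the paper to compare against; your direct computation---trivially summing in the case $b=0$, and in the case $b\neq 0$ reindexing via the bijection $x\mapsto bx+c$ and then using surjectivity of the trace to reduce to $\sum_{t\in\F_p}\zeta_p^{t}=0$---is exactly the elementary proof one would expect, and it fully justifies the statement.
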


For a general $f\in \F_q[X]$, no explicit expression for the exponential sum $\varepsilon_f$ exists. The following theorem provides a good upper bound for the exponential sum $\varepsilon_f$.
\begin{theorem}[Hasse-Davenport-Weil Bound, \cite{LN97}, Ch.5]\label{WeilBound}
Let $f\in\mathbb{F}_q[X]$ be a polynomial of degree $n\ge 1$. If $gcd(n,q) = 1$, then
  $$
 | \varepsilon_f | \le (n-1) q^{1/2}.
  $$
  \end{theorem}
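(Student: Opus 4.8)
The statement is the classical Weil bound for additive character sums; since it is quoted from \cite{LN97}, the task is really to indicate how one derives it from the Riemann Hypothesis for curves over finite fields. Throughout, write $\psi$ for the nontrivial additive character $\psi(t) = \zeta_p^{Tr(t)}$, and observe that the hypothesis $\gcd(n,q)=1$ with $q=p^e$ is exactly $p\nmid n$. The case $n=1$ is already contained in Lemma~\ref{exp_sums} (there the bound $(n-1)q^{1/2}=0$ matches $\varepsilon_f=0$), so the content lies in $n\ge 2$.

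The plan is to package all the relevant sums at once into an $L$-function and read off $\varepsilon_f$ as a sum of its inverse roots. For $m\ge 1$ set $S_m=\sum_{x\in\F_{q^m}}\psi\bigl(Tr_{\F_{q^m}/\F_q}(f(x))\bigr)$, so that $S_1=\varepsilon_f$, and form
$$L(T)=\exp\Bigl(\sum_{m\ge 1} S_m\,\frac{T^m}{m}\Bigr).$$
The first step is to identify $L(T)$ geometrically. I would introduce the Artin--Schreier curve $C\colon y^p-y=f(x)$ over $\F_q$; counting its affine points fibrewise and using that $y\mapsto y^p-y$ is an $\F_p$-linear map with image $\ker(Tr)$ shows that the point counts of $C$ are governed simultaneously by the character sums for all nontrivial additive characters. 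The group $\mathbb{Z}/p\mathbb{Z}$ acts on $C$ by $y\mapsto y+1$, and decomposing the zeta function of $C$ into isotypic components under this action exhibits $L(T)$ as the factor attached to the single character $\psi$.

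The second, and central, step is to show that $L(T)$ is a polynomial of degree exactly $n-1$. Here the hypothesis $p\nmid n$ is essential: the cover $C\to\mathbb{P}^1$ defined by $x$ is unramified over the affine line and totally (wildly) ramified over the single point $x=\infty$, with different exponent $(p-1)(n+1)$, so the Riemann--Hurwitz formula gives $C$ the genus $g_C=(p-1)(n-1)/2$; the $p-1$ nontrivial additive characters then split the degree-$2g_C$ numerator of the zeta function into $p-1$ equal factors, each of degree $n-1$. Writing $L(T)=\prod_{j=1}^{n-1}(1-\omega_j T)$ and comparing logarithmic derivatives yields $S_m=-\sum_{j=1}^{n-1}\omega_j^m$; in particular $\varepsilon_f=S_1=-\sum_{j=1}^{n-1}\omega_j$.

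Finally I would invoke Weil's Riemann Hypothesis for curves over finite fields, which asserts that each inverse root is pure of weight one, $|\omega_j|=q^{1/2}$. The triangle inequality then gives $|\varepsilon_f|=\bigl|\sum_{j=1}^{n-1}\omega_j\bigr|\le (n-1)q^{1/2}$, as claimed. The main obstacles are precisely the two deep inputs: the rationality of $L(T)$ together with the degree count $n-1$ (via Dwork's rationality theorem or the Grothendieck--Lefschetz trace formula, plus the genus computation above), and the purity $|\omega_j|=q^{1/2}$, which is Weil's theorem. An alternative, self-contained route that avoids this machinery is the elementary Stepanov--Schmidt method, which bounds the number of $\F_q$-points of $C$ directly by constructing auxiliary polynomials forced to vanish to high order; it yields the Hasse--Weil point bound and hence the same estimate for $\varepsilon_f$.
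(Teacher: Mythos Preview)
Your sketch is a correct and standard outline of the proof via Artin--Schreier curves and Weil's Riemann Hypothesis for curves: the genus computation $g_C=(p-1)(n-1)/2$ from Riemann--Hurwitz (using the different exponent $(p-1)(n+1)$ at $\infty$, which is exactly where $p\nmid n$ enters), the isotypic decomposition of the zeta function giving $\deg L(T)=n-1$, and the purity $|\omega_j|=q^{1/2}$ are all accurate.

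However, the paper does not prove this theorem at all. Theorem~\ref{WeilBound} is stated with a bare citation to Lidl--Niederreiter \cite{LN97}, Chapter~5, and is used purely as a black box in the subsequent arguments (Lemma~\ref{lem}, Lemma~\ref{cubelemma}, and the proofs of Theorems~\ref{aaaa} and~\ref{aaaaa}). So there is nothing in the paper to compare your approach against; you have supplied a proof where the authors deliberately chose to quote a classical result. If anything, your write-up goes well beyond what the paper requires---for the purposes of this paper, simply citing \cite{LN97} (or Weil's original work) suffices.
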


\begin{lemma}\label{oddexp}
  Suppose that $g\in\F_q[X]$ and $g(-X) =-\,g(X)$. Then $\varepsilon_g$ is a real number.
 \end{lemma}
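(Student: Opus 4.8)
The plan is to show directly that $\varepsilon_g$ equals its own complex conjugate. Recall $\varepsilon_g = \sum_{x\in\F_q}\zeta_p^{Tr(g(x))}$, and since $\zeta_p = \exp(2\pi i/p)$, complex conjugation sends $\zeta_p^{Tr(g(x))}$ to $\zeta_p^{-Tr(g(x))} = \zeta_p^{Tr(-g(x))}$, using that the trace map is additive (indeed $\F_p$-linear), so $Tr(-\alpha) = -Tr(\alpha)$. Hence
\[
\overline{\varepsilon_g} \;=\; \sum_{x\in\F_q}\zeta_p^{Tr(-g(x))} \;=\; \sum_{x\in\F_q}\zeta_p^{Tr(g(-x))},
\]
where the last equality is exactly the hypothesis $g(-X) = -g(X)$, i.e. $-g(x) = g(-x)$ for every $x\in\F_q$.

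The key step is then to observe that the substitution $x\mapsto -x$ is a bijection of $\F_q$ onto itself (it is its own inverse, and is well-defined even in characteristic $2$, where it is simply the identity). Re-indexing the sum over $y = -x$ gives
\[
\overline{\varepsilon_g} \;=\; \sum_{y\in\F_q}\zeta_p^{Tr(g(y))} \;=\; \varepsilon_g,
\]
so $\varepsilon_g\in\mathbb{R}$.

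There is essentially no obstacle here; the only points requiring a word of care are that $Tr$ is $\F_p$-linear (so it commutes with negation) — which is standard and may be cited from \cite{LN97} — and that the conjugate of a sum of roots of unity is the sum of the conjugates, together with $\overline{\zeta_p^{\,n}} = \zeta_p^{-n}$ for integer exponents $n$, which is immediate from $|\zeta_p| = 1$. One should also note that the exponent $Tr(g(x))$ lives in $\F_p$ and the notation $\zeta_p^{\beta}$ for $\beta\in\F_p$ means $\zeta_p^{\beta^*}$ as defined in Section~\ref{MR}; since $\beta\mapsto-\beta$ in $\F_p$ corresponds to $\beta^*\mapsto (p-\beta^*)\bmod p$ and $\zeta_p^{p}=1$, the identity $\overline{\zeta_p^{\beta}} = \zeta_p^{-\beta}$ is consistent with this convention. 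With these remarks in place the proof is three lines.
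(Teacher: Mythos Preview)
Your proof is correct and uses essentially the same idea as the paper's: both rely on the oddness of $g$ together with the bijection $x\mapsto -x$ on $\F_q$. The only cosmetic difference is that the paper symmetrizes the sum by pairing the terms for $a$ and $-a$ (writing $\zeta_p^{Tr(g(a))}+\zeta_p^{-Tr(g(a))}\in\mathbb{R}$), whereas you verify $\overline{\varepsilon_g}=\varepsilon_g$ by conjugating and reindexing --- these are two standard packagings of the same argument.
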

\begin{proof}We have that
\begin{align*}
\varepsilon_g &= \sum\limits_{a\in\F_q}\zeta_p^{\ Tr(g(a))}= 1+\sum\limits_{a\in\F_q^*}\zeta_p^{\ Tr(g(a))}= 1+\frac{1}{2}\sum\limits_{a\in\F_q^*}\left (\zeta_p^{\ Tr(g(a))}+\zeta_p^{\ Tr(g(-a))}\right)\\&= 1+\frac{1}{2}\sum\limits_{a\in\F_q^*}\left (\zeta_p^{\ Tr(g(a))}+\zeta_p^{\ Tr(-g(a))}\right)= 1+\frac{1}{2}\sum\limits_{a\in\F_q^*}\left (\zeta_p^{\ Tr(g(a))}+\zeta_p^{\ -Tr(g(a))}\right).
\end{align*}
Since $\zeta_p^{\beta}+\zeta_p^{-\beta}\in\mathbb{R}$ for any $\beta\in\F_p$, it follows that $\varepsilon_g\in \mathbb{R}$.
\end{proof}

\section{Spectra of the graphs $S(k,q)$}\label{EIGEN}
The proof we present here is based on the same idea as the one in \cite{CiLaLi14}. Namely, computing eigenvalues of Cayley graphs by using the method suggested in Babai~\cite{Babai}.
 The original completely different (and much longer) proof of Theorem \ref{a} that used circulants appears in Sun \cite{thesis}.
\begin{theorem}[\cite{Babai}]\label{S3SS2:CayleySpectrum}
Let $G$ be a finite group and $S\subseteq G$ such that $1\not\in S$ and $S^{-1} = S$. Let $\{\pi_1,\ldots,\pi_k\}$ be a representative set of irreducible $\mathbb{C}$-representations of $G$.
 Suppose that the multiset $\Lambda_i := \{\lambda_{i,1},\lambda_{i,2},\ldots,\lambda_{i,n_i}\}$ is the spectrum of the complex $n_i\times n_i$ matrix $\pi_i(S) = \sum\limits_{s\in S}\pi_i(s) $. Then the spectrum of the  Cayley graph $X =$ Cay$(G,S)$ is the multiset formed as the union of $n_i$ copies of $\Lambda_i$ for $i\in \{1,2,\ldots,k\}$.
\end{theorem}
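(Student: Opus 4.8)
The plan is to realize the adjacency matrix of $X=\mathrm{Cay}(G,S)$ as the image of the group-algebra element $\sigma=\sum_{s\in S}s$ under the regular representation of $G$, and then to exploit the Wedderburn decomposition of the regular representation into irreducibles. First I would fix the convention that $X$ has vertex set $G$ with $g$ adjacent to $h$ precisely when $g^{-1}h\in S$; because $1\notin S$ there are no loops, and because $S^{-1}=S$ the relation is symmetric, so $A=A(X)$ is a genuine symmetric $0/1$ matrix. Writing $\{e_g:g\in G\}$ for the standard basis of the group algebra $\mathbb{C}[G]$ and letting $R$ denote the right regular representation $R_s e_g=e_{gs}$, a direct check of matrix entries shows that the $(h,g)$-entry of $\sum_{s\in S}R_s$ equals the number of $s\in S$ with $gs=h$, which is $1$ exactly when $g^{-1}h\in S$ and $0$ otherwise; hence $A=\sum_{s\in S}R_s=R(\sigma)$.

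Second, I would invoke the standard structural fact (Maschke's theorem together with the Wedderburn decomposition of $\mathbb{C}[G]$) that the regular representation splits as $R\cong\bigoplus_{i=1}^{k}\bigl(\mathbb{C}^{n_i}\otimes\pi_i\bigr)$, where $\pi_1,\ldots,\pi_k$ is a complete set of inequivalent irreducibles, $n_i=\dim\pi_i$, and $G$ acts trivially on the multiplicity space $\mathbb{C}^{n_i}$ and by $\pi_i$ on the second tensor factor. The crucial point is that an equivalence of $G$-representations intertwines the action of every element of the group algebra, not merely of individual group elements; applying it to $\sigma=\sum_{s\in S}s$ yields that $A=R(\sigma)$ is similar to $\bigoplus_{i=1}^{k}\bigl(I_{n_i}\otimes\pi_i(\sigma)\bigr)$, where $\pi_i(\sigma)=\sum_{s\in S}\pi_i(s)=\pi_i(S)$. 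In this block form the matrix $\pi_i(S)$ appears as exactly $n_i$ identical diagonal blocks.

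Finally, I would conclude by comparing spectra. Similar matrices share the same characteristic polynomial and hence the same multiset of eigenvalues, so the spectrum of $A$ is the union over $i$ of the spectra of the $n_i$ copies of $\pi_i(S)$, that is, the union of $n_i$ copies of $\Lambda_i$, as claimed. To handle the multiset statement cleanly I would note that each $\pi_i$ may be chosen unitary, whence $S=S^{-1}$ forces $\pi_i(S)^{\ast}=\sum_{s\in S}\pi_i(s^{-1})=\pi_i(S)$; thus each $\pi_i(S)$ is Hermitian, diagonalizable with real eigenvalues, and its algebraic and geometric multiplicities agree, consistent with the diagonalizability of the symmetric matrix $A$. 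The only step that demands genuine care is the second one: one must use that the decomposition is an equivalence of representations, i.e. an intertwiner for the whole group algebra, so that substituting $\sigma$ is legitimate; the remainder is bookkeeping of multiplicities.
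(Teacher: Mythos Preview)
Your argument is correct and is essentially the standard proof of Babai's theorem: identify the adjacency matrix with the image of $\sigma=\sum_{s\in S}s$ under the regular representation, decompose the regular representation into irreducibles with the correct multiplicities, and read off the block structure. The remark that each $\pi_i$ can be taken unitary so that $\pi_i(S)$ is Hermitian is a nice touch that cleanly justifies the multiset bookkeeping.

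One point of comparison you should be aware of: the paper does \emph{not} supply its own proof of this statement. Theorem~\ref{S3SS2:CayleySpectrum} is quoted from Babai~\cite{Babai} and used as a black box in the proof of Theorem~\ref{a}. So there is no ``paper's own proof'' to compare against here; your proposal simply fills in the argument that the authors chose to cite rather than reproduce. What you have written matches the approach in Babai's original paper, and nothing in your outline would fail.
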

\begin{proof}[\textbf{Proof of Theorem \ref{a}}]
As we mentioned in Section \ref{MR}, $S(k,q)$ is a Cayley graph with the underlying group $G$ being  the additive group of the vector space $\F_q^k$, and connection set
$$
\left\{ \bigl(a, au, g_3(a)f_3(u), \cdots, g_k(a)f_k(u)\bigr) \ |\ a\in\F_q^*, u\in\F_q\right\}.
$$
Since $G$ is an abelian group, it follows that the irreducible $\mathbb{C}$-representations of $G$ are linear (see \cite{Isaac}, Ch. 2). They are given by
$$
\pi_w(v) = [\zeta_p^{\ Tr(w_1v_1+\cdots+w_kv_k) }],
$$
where $w = (w_1,\cdots,w_k)\in\F_q^k$ and $v = (v_1,\cdots,v_k)\in\F_q^k$.

Using  Theorem \ref{S3SS2:CayleySpectrum}, we conclude that the spectrum of $S(k,q)$ is a multiset formed by all $\lambda_w$, $w = (w_1,\cdots,w_k)\in\F_q^k$, of the form:
\begin{align*}
\lambda_w &= \sum\limits_{s\in S} \zeta_p^{\ Tr(w_1s_1+\cdots+w_ks_k)}\\
                &=\sum\limits_{a\in\mathbb{F}_q^*,u\in\mathbb{F}_q} \zeta_p^{\ Tr\bigl(aw_1+auw_2+\sum\limits_{i=3}^k g_i(a)f_i(u)w_i\bigr)}.
\end{align*}
\end{proof}

\section{Connectivity and expansion of the graphs $S(k,q)$}\label{CONEXP}

It is hard to get a
 closed form of $\lambda_w$ in (\ref{S2SS4:eignoclosed}) for arbitrary $f_i$ and $g_i$.
But if the degrees of the polynomials $f_i$ and $g_i$ satisfy some  conditions, we are able to show that the components of the graphs $S(k,q)$ have large eigenvalue gap. For these $f_i$ and $g_i$, we find sufficient conditions such that the graphs $S(k,q)$ are connected, and hence form a family of expanders.

From now on,
 for any graph $S(k,q; f_3,g_3,\cdots,f_k,g_k)$, we let $d_{g} = \max\limits_{3\le i\le k} \deg(g_i)$ and $d_{f} = \max\limits_{3\le i\le k} \deg(f_i)$. We also assume that $d_f \ge 1$ and $d_g\ge 1$.
 For each $i$, $3\le i\le k$, let $c_{i,j}$ be the coefficient of $X^j$ in
the polynomial $g_i$, for any $j$, $1\le j\le d_{g}$, i.e.
$$g_{i}(X) = c_{i,1}X+c_{i,2}X^2+\ldots+c_{i,d_{g}}X^{d_{g}}.$$

For any $w = (w_1,\cdots, w_k)$ in $\F_q^k$, let $N_w$ be the number of $u$'s in $\mathbb{F}_q$ satisfying the following system
  \begin{align}\label{S2SS4:N1}
  w_1+uw_2 +\sum\limits_{i=3}^{k} c_{i,1}f_{i}(u)w_{i}&= 0,\nonumber\\
  \sum\limits_{i=3}^{k} c_{i,j}f_{i}(u)w_{i} &= 0 , \hskip 10mm {2 \le j \le d_{g}}
\end{align}
and let
$S_w$
 be the set of all $u$'s in $\F_q$ such that the following inequality holds for some $j$, $2\le j\le d_{g}$,
\begin{equation}\label{S2SS4:N2}
  \sum\limits_{i=3}^{k} c_{i,j}f_{i}(u)w_{i} \ne 0.
\end{equation}
If $d_g = 1$, then system (\ref{S2SS4:N1}) contains only the first equation, and $S_w =\emptyset$.

\begin{lemma}\label{lem}
Let $k\ge 3$. If $1\le d_{g} < p$,
 then for any $w = (w_1,\cdots,w_k)$ in $\F_q^k$, the eigenvalue $\lambda_w$ of $S(k,q)$ in (\ref{S2SS4:eignoclosed}) is at most
\begin{equation}\label{S2SS4:b}
N_{w}(q-1)+ |S_{w}|[(d_{g}-1)\sqrt{q}+1].
\end{equation}
Moreover, $\lambda_w = q(q-1)$ if and only if $N_w  = q$.
\end{lemma}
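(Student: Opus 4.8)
The plan is to start from the explicit expression \eqref{S2SS4:eignoclosed} for $\lambda_w$ and to evaluate the inner sum over $a\in\F_q^*$ first, for each fixed $u\in\F_q$. For fixed $u$, the exponent is $Tr\bigl(a w_1 + a u w_2 + \sum_{i=3}^k g_i(a) f_i(u) w_i\bigr)$, which, using $g_i(a)=\sum_{j=1}^{d_g} c_{i,j}a^j$, is $Tr\bigl(G_u(a)\bigr)$ where $G_u(X)=\bigl(w_1+uw_2+\sum_i c_{i,1}f_i(u)w_i\bigr)X+\sum_{j=2}^{d_g}\bigl(\sum_i c_{i,j}f_i(u)w_i\bigr)X^j$ is a polynomial in $a$ of degree at most $d_g$. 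Thus $\sum_{a\in\F_q^*}\zeta_p^{Tr(G_u(a))}=\varepsilon_{G_u}-1$. I would split the values of $u$ into three cases according to $G_u$: if $u\notin S_w$ and $u$ satisfies \eqref{S2SS4:N1} (so $G_u\equiv 0$), the contribution is $q-1$; if $u\notin S_w$ but the linear coefficient is nonzero (so $\deg G_u=1$), Lemma~\ref{exp_sums} gives $\varepsilon_{G_u}=0$, contribution $-1$; if $u\in S_w$, then $G_u$ is a genuine polynomial of degree $m$ with $2\le m\le d_g<p$, so $\gcd(m,q)=1$ (since $p\nmid m$ and $q$ is a power of $p$), and the Hasse--Davenport--Weil bound (Theorem~\ref{WeilBound}) gives $|\varepsilon_{G_u}|\le (m-1)\sqrt q\le (d_g-1)\sqrt q$, hence a contribution of absolute value at most $(d_g-1)\sqrt q+1$.

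Summing these contributions over all $u\in\F_q$: the first case occurs for exactly $N_w$ values of $u$, contributing $N_w(q-1)$; the third case occurs for exactly $|S_w|$ values of $u$, contributing at most $|S_w|\bigl[(d_g-1)\sqrt q+1\bigr]$ in absolute value; the second case, occurring for the remaining $q-N_w-|S_w|$ values of $u$, contributes $-(q-N_w-|S_w|)\le 0$. Applying the triangle inequality and discarding the (nonpositive) second-case terms yields
$$
\lambda_w \le N_w(q-1) + |S_w|\bigl[(d_g-1)\sqrt q + 1\bigr],
$$
which is the bound \eqref{S2SS4:b}. When $d_g=1$ the degenerate convention ($S_w=\emptyset$, system \eqref{S2SS4:N1} being just the first equation) makes the same argument go through with only the first two cases present.

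For the characterization of when $\lambda_w = q(q-1)$: the "if" direction is immediate, since $N_w=q$ forces $S_w=\emptyset$ (every $u$ satisfies all equations of \eqref{S2SS4:N1}, in particular all the $j\ge 2$ ones, so \eqref{S2SS4:N2} fails for every $u$), whence $G_u\equiv 0$ for all $u$ and $\lambda_w=\sum_{u}(q-1)=q(q-1)$. For "only if", I would note that $\lambda_w$ is the sum over the $q(q-1)$ generators of $q(q-1)$-th roots of unity $\zeta_p^{Tr(\cdot)}$, so $|\lambda_w|\le q(q-1)$ with equality only if all these roots of unity are equal (to $1$, since the all-zero generator would be needed — actually, more carefully: equality in the triangle inequality forces all summands to coincide, and since the generator with $a$ ranging includes enough structure, they must all equal $1$). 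Concretely, $\lambda_w=q(q-1)$ forces $Tr(G_u(a))=0$ for all $a\in\F_q^*$ and all $u$; fixing $u$, the map $a\mapsto Tr(G_u(a))$ vanishing on all of $\F_q^*$ (hence on $\F_q$, as $G_u(0)=0$) together with $\deg G_u\le d_g<p\le q$ and standard facts on additive characters forces $G_u\equiv 0$, i.e. $u$ satisfies \eqref{S2SS4:N1}; since this holds for every $u$, we get $N_w=q$. The main obstacle is this last implication — rigorously deducing $G_u\equiv 0$ from $Tr(G_u(a))=0$ for all $a$ — which requires that a nonzero polynomial of degree $<p$ cannot have its trace vanish identically; this follows because such a polynomial, composed with $Tr$, would otherwise be a nonzero polynomial of degree $<q$ vanishing on all of $\F_q$, a contradiction. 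I would also double check the edge interaction between the $d_g<p$ hypothesis and the $\gcd(m,q)=1$ requirement in the Weil bound, which is exactly why the hypothesis $d_g<p$ (rather than merely $\gcd(d_g,q)=1$) is imposed.
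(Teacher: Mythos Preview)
Your argument for the bound \eqref{S2SS4:b} is exactly the paper's: write $\lambda_w=\sum_{u}z_u$ with $z_u=\varepsilon_{G_u}-1$, split the values of $u$ into the three cases (solutions of \eqref{S2SS4:N1}, elements of $S_w$, and the rest), and invoke the Weil bound for $u\in S_w$. Your observation that the middle case contributes something nonpositive and can simply be dropped is in fact a slightly cleaner bookkeeping than the paper's two-line chain of inequalities, but the substance is identical.

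For the ``only if'' direction you take a different, more uniform route than the paper. The paper assumes $N_w<q$ and splits into three subcases: when $e>1$ it checks $(d_g-1)\sqrt q+1<q-1$ so that the bound \eqref{S2SS4:b} itself already forces $\lambda_w<q(q-1)$; when $q=p=2$ it notes $d_g=1$ so $S_w=\emptyset$; and when $q=p\ge 3$ it argues via $|z_u|\le p-1$ that equality forces a polynomial of degree $<p$ to vanish on all of $\F_p$. You instead observe in one stroke that $\lambda_w=q(q-1)$, being a sum of $q(q-1)$ numbers each of modulus~$1$, forces every summand to equal~$1$, hence $Tr(G_u(a))=0$ for all $a\in\F_q$; since $G_u$ has degree at most $d_g<p$, the polynomial $\sum_{j=0}^{e-1}G_u(X)^{p^j}$ has degree $<q$ and leading coefficient nonzero unless $G_u\equiv 0$, so it cannot vanish on all of $\F_q$. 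This single degree argument replaces the paper's three-way case analysis and is correct as written (the key point, which you identified as the main obstacle, is precisely that $d_g<p$ guarantees $\deg G_u\cdot p^{e-1}<q$). One cosmetic slip: you wrote ``$q(q-1)$-th roots of unity'' where you meant ``$q(q-1)$ many $p$-th roots of unity''.
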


\begin{proof}
Let $w = (w_1,\ldots,w_k)\in\mathbb{F}_q^k$. Using Theorem \ref{a}, we have
\begin{align*}
\lambda_{w}
     &= \sum\limits_{u\in\mathbb{F}_q}\sum\limits_{a\in\mathbb{F}_q^*} \zeta_p^{\ Tr\Big(a(w_{1}+uw_{2})+\sum\limits_{i=3}^{k}g_{i}(a)f_{i}(u)w_{i}\Big)}\\
     &= \sum\limits_{u\in\F_q}\sum\limits_{a\in\F_q^*} \zeta_p^{\ Tr\Big(
     a\bigl[w_1+uw_2+\sum\limits_{i=3}^{k}c_{i,1}f_{i}(u)w_{i}\bigr]
+a^2\sum\limits_{i=3}^{k}c_{i,2}f_{i}(u)w_{i}+\cdots+a^{d_{g}}\sum\limits_{i=3}^{k}c_{i,d_{g}}f_{i}(u)w_{i}
     \Big)}\\
     &= \sum\limits_{u\in\mathbb{F}_q} z_u,
  \end{align*}
where
$$z_u= \sum\limits_{a\in\F_q^*} \zeta_p^{\ Tr\Big(
     a\bigl[w_1+uw_2+\sum\limits_{i=3}^{k}c_{i,1}f_{i}(u)w_{i}\bigr]
+a^2\sum\limits_{i=3}^{k}c_{i,2}f_{i}(u)w_{i}+\cdots+a^{d_g}\sum\limits_{i=3}^{k}c_{i,d_g}f_{i}(u)w_{i}
     \Big)}.
  $$
If $u$ satisfies (\ref{S2SS4:N1}), then $z_u = q - 1$.
If $u\in S_w$, then $z_u$ is an exponential sum of a polynomial of degree at least 2 and at most $d_g$. By the assumption of the theorem that $d_g < p$ and Weil's bound in Theorem \ref{WeilBound}, it follows that
$$
|z_u| \le (d_g-1)\sqrt{q}+1.
$$
Finally, for the remaining $q - N_w - |S_w|$ elements $u\in\F_q$,
we have
\begin{align}\label{S2SS4:N3}
  w_1+uw_2 +\sum\limits_{i=3}^{k} c_{i,1}f_{i}(u)w_{i}&\ne 0\nonumber\\
  \sum\limits_{i=3}^{k} c_{i,j}f_{i}(u)w_{i} &= 0 , \hskip 10mm 2 \le j \le d_g.
\end{align}
If $d_g = 1$, then system (\ref{S2SS4:N3}) contains only the first inequality. In both cases, we have $z_u = -1$.
Therefore, we have
\begin{align*}
\lambda_{w} &= N_w(q-1)+\sum\limits_{u\in S_w} z_{u} + (q-N_w-|S_w|)(-1)\\
   &\le (N_w-1)q+|S_w|[(d_g-1)\sqrt{q}+2]\\
                    &\le N_w(q-1) + |S_w|[(d_g-1)\sqrt{q}+1].
 \end{align*}
Let us now prove the second statement of the lemma.
It is clear that if $N_w = q$, then $|S_w|=0$
and $\lambda_w = q(q-1)$.
For the rest of this proof,  we assume that $N_w < q$, and show that  $\lambda_w < q(q-1)$.

If $e > 1$, then $(d_g-1)\sqrt{q}+1 < q-1$ as $d_g < p$. Therefore, $\lambda_w < q(q-1)$.

 For $e = 1$, we consider the following two cases: $q = p = 2$ and $q = p\ge 3$.

 If  $q = p = 2$, then $d_g = 1$ as $d_g < p$, and hence $|S_w| = 0$. Therefore, $\lambda_w < q(q-1)$.

If $q= p \ge 3$, then, as $\lambda_w$ is a real number and $|z_u| \le p-1$, we have
$$\lambda _w \le |\lambda_w| = |\sum_{u\in\F_p} z_u|  \le \sum_{u\in\F_p} |z_u| \le p(p-1),$$
and $\lambda _w = p(p-1)$ if and only if  $z_u=p-1$ for all  $u\in\F_p$.  The latter condition is equivalent to
$$Tr\Big(
     a\bigl[w_1+uw_2+\sum\limits_{i=3}^{k}c_{i,1}f_{i}(u)w_{i}\bigr]
+a^2\sum\limits_{i=3}^{k}c_{i,2}f_{i}(u)w_{i}+
\cdots+a^{d_g}\sum\limits_{i=3}^{k}c_{i,d_g}f_{i}(u)w_{i}
     \Big)=0 $$
for all $u\in\F_p$.   For $x\in \F_p$, $Tr(x) = 0$ if and only if $x=0$. This implies  that
$$a\bigl[w_1+uw_2+\sum\limits_{i=3}^{k}c_{i,1}f_{i}(u)w_{i}\bigr]
+a^2\sum\limits_{i=3}^{k}c_{i,2}f_{i}(u)w_{i}+\cdots+a^{d_g}\sum\limits_{i=3}^{k}c_{i,d_g}f_{i}(u)w_{i}     =0$$
for any $a\in \F_p^*$.
Therefore, the polynomial
$$X\bigl[w_1+uw_2+\sum\limits_{i=3}^{k}c_{i,1}f_{i}(u)w_{i}\bigr]
+X^2\sum\limits_{i=3}^{k}c_{i,2}f_{i}(u)w_{i}+\cdots+X^{d_g}\sum\limits_{i=3}^{k}c_{i,d_g}f_{i}(u)w_{i},$$
which is over $\F_p$, has $p$ distinct roots in $\F_p$ and is of degree at most $d_g$, $d_g< p$.  Hence,  it must be zero polynomial,  and so $N_p=p$, a contradiction. Hence, $\lambda_w <p(p-1)$.
\end{proof}

Let $(q_m)_{m\geq 1}$ be an increasing
 sequence of prime powers. For a fixed $k$, $k\ge 3$, we consider
 an infinite family of graphs $S(k,q_m; f_{3,m},g_{3,m},\cdots, f_{k,m}, g_{k,m})$. Hence,  $|V\bigl(S(k,q_m)\bigr)|  = q_m ^k\rightarrow \infty$ when $m\to \infty$. 
Let
$d_f^{(m)} = \max\limits_{3\le i\le k} \deg(f_{i,m})$ and $d_g^{(m)} = \max\limits_{3\le i\le k} \deg(g_{i,m})$, for each $m$.
In what follows we present conditions on $d_f^{(m)}$ and $d_g^{(m)}$  which imply
that the components of these graphs have large eigenvalue gaps.
\begin{theorem}\label{S2SS4:largegapgeneral}
Let $(q_m)_{m\geq 1}$ be an increasing
 sequence of prime powers.  Suppose that $d_f^{(m)} \ge 1$ and $1\le d_g^{(m)} < p_m$ for all $m$.
Let $\lambda^{(m)}$ be the largest eigenvalue of $S(k,{q_m})$ which is not $q_m(q_m-1)$ for any $m$.
Then
$$
\lambda^{(m)} = \max\big(O(d_f^{(m)}q_m),O(d_g^{(m)}q_m^{3/2})\big).
$$
\end{theorem}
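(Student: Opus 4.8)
The plan is to read the bound straight off Lemma \ref{lem}, combined with an elementary degree count on the polynomial system \eqref{S2SS4:N1}. By the second assertion of Lemma \ref{lem}, an eigenvalue $\lambda_w$ of $S(k,q_m)$ equals $q_m(q_m-1)$ if and only if $N_w = q_m$; consequently $\lambda^{(m)}$ is attained at some $w = (w_1,\ldots,w_k)\in\F_{q_m}^k$ with $N_w < q_m$ (such $w$ exist — e.g. $w=(1,0,\ldots,0)$ has $N_w = 0$, and indeed $\lambda_w = -q_m$ by Lemma \ref{exp_sums}). For this $w$, Lemma \ref{lem} gives
$$\lambda^{(m)} = \lambda_w \le N_w(q_m-1) + |S_w|\bigl[(d_g^{(m)}-1)\sqrt{q_m}+1\bigr],$$
so it remains to control the two summands. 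Since $S_w\subseteq\F_{q_m}$ we have $|S_w|\le q_m$, whence the second summand is at most $q_m\bigl[(d_g^{(m)}-1)\sqrt{q_m}+1\bigr] \le d_g^{(m)} q_m^{3/2}$, which is $O(d_g^{(m)} q_m^{3/2})$.

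The substantive step is to show that $N_w < q_m$ forces $N_w \le d_f^{(m)}$. Each of the polynomials in $u$ on the left-hand side of \eqref{S2SS4:N1} — the polynomial $w_1 + u w_2 + \sum_{i=3}^k c_{i,1} f_{i,m}(u) w_i$ and, for $2\le j\le d_g^{(m)}$, the polynomial $\sum_{i=3}^k c_{i,j} f_{i,m}(u) w_i$ — has degree at most $\max(1,d_f^{(m)}) = d_f^{(m)}$, because $\deg f_{i,m}\le d_f^{(m)}$ and $d_f^{(m)}\ge 1$. By the standing convention that the $f_{i,m}$ have degree at most $q_m-1$, we have $d_f^{(m)}\le q_m-1 < q_m$, so any \emph{nonzero} such polynomial has strictly fewer than $q_m$ roots in $\F_{q_m}$. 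Hence $N_w = q_m$ would force every one of these polynomials to vanish identically; contrapositively, if $N_w < q_m$ then at least one of them is a nonzero polynomial of degree at most $d_f^{(m)}$, whose root set contains the $N_w$ common solutions, so $N_w \le d_f^{(m)}$.

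Combining the two estimates, for the $w$ realizing $\lambda^{(m)}$ we obtain
$$\lambda^{(m)} \le d_f^{(m)}(q_m-1) + q_m\bigl[(d_g^{(m)}-1)\sqrt{q_m}+1\bigr] \le d_f^{(m)} q_m + d_g^{(m)} q_m^{3/2} \le 2\max\bigl(d_f^{(m)} q_m,\, d_g^{(m)} q_m^{3/2}\bigr),$$
which is exactly the claimed $\lambda^{(m)} = \max\bigl(O(d_f^{(m)} q_m),\, O(d_g^{(m)} q_m^{3/2})\bigr)$. I do not anticipate a genuine obstacle: the only delicate point is the degree bookkeeping above, and in particular the observation that $d_f^{(m)}\le q_m-1 < q_m$, so that a polynomial vanishing on all of $\F_{q_m}$ must be the zero polynomial. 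Note that the hypothesis $1\le d_g^{(m)} < p_m$ is used only upstream, to license the application of the Weil bound (Theorem \ref{WeilBound}) inside the proof of Lemma \ref{lem}; it plays no further role in this deduction.
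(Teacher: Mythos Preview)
Your proof is correct and follows essentially the same approach as the paper: invoke Lemma~\ref{lem}, observe that $N_w<q_m$ forces $N_w\le d_f^{(m)}$ by a degree count on the polynomials in system~\eqref{S2SS4:N1}, bound $|S_w|\le q_m$, and combine. You supply more detail than the paper does on the degree bookkeeping (in particular the point that $d_f^{(m)}\le q_m-1$, so a polynomial of that degree cannot vanish on all of $\F_{q_m}$ unless it is identically zero), but the argument is the same.
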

\begin{proof}
For any $w \in\F_{q_m}^k$, the eigenvalue $\lambda_{w}$ of $S(k,q_m)$ is at most
$$N_{w}(q_m-1) + |S_{w}|[(d_g^{(m)}-1)\sqrt{q_m}+1],$$
by Lemma \ref{lem}.

It is clear that for any $w\in \F_{q_m}^k$,
  system (\ref{S2SS4:N1}) has either $N_w = q_m$ solutions or at most $d_f^{(m)}$ solutions with respect to $u$. If $N_w = q_m$, then $\lambda_w = q_m(q_m-1)$ by Lemma \ref{lem}. If
  $N_{w} < q_m$,
   then $N_{w}\le d_f^{(m)}$. Therefore, we have
\begin{align*}
\lambda_{w} &\le d_f^{(m)} (q_m-1) + q_m[(d_g^{(m)}-1)\sqrt{q_m}+1] \\
                        &\le d_f^{(m)}q_m+d_g^{(m)}q_m^{3/2} = \max\big(O(d_f^{(m)}q_m),O(d_g^{(m)}q_m^{3/2})\big).
\end{align*}
\end{proof}
As an immediate  corollary from Theorem \ref{S2SS4:largegapgeneral}, we have the following theorem.
\begin{theorem}\label{S2SS4:largegap}
Let $(q_m)_{m\geq 1}$ be an increasing
 sequence of prime powers.
Suppose that $1\le d_f^{(m)}= o_m(q_m)$, $d_g^{(m)}= o_m(\sqrt{q_m})$ and $1\le d_g^{(m)}< p_m$ for all $m$.
Let $\lambda^{(m)}$ be the largest eigenvalue of $S(k,{q_m})$ which is not $q_m(q_m-1)$ for any $m$.
Then
$$
\lambda^{(m)} = o_m(q_m^2).$$
\end{theorem}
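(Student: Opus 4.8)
The plan is to derive this as a one-line consequence of Theorem~\ref{S2SS4:largegapgeneral}. First I would check that the hypotheses of that theorem are in force: we are given $d_f^{(m)}\ge 1$ and $1\le d_g^{(m)}<p_m$ for all $m$, which is exactly what Theorem~\ref{S2SS4:largegapgeneral} requires. Hence, for every $m$,
$$\lambda^{(m)} \le C_1\, d_f^{(m)} q_m + C_2\, d_g^{(m)} q_m^{3/2}$$
for absolute constants $C_1,C_2$; inspecting the proof of Theorem~\ref{S2SS4:largegapgeneral} one sees that in fact the explicit bound $d_f^{(m)}q_m + d_g^{(m)}q_m^{3/2}$ is produced, so the implied constants do not depend on $m$.

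Next I would estimate each of the two terms separately against $q_m^2$. For the first term, $d_f^{(m)}q_m/q_m^2 = d_f^{(m)}/q_m \to 0$ as $m\to\infty$ by the assumption $d_f^{(m)} = o_m(q_m)$, hence $d_f^{(m)}q_m = o_m(q_m^2)$. For the second term, $d_g^{(m)}q_m^{3/2}/q_m^2 = d_g^{(m)}/\sqrt{q_m} \to 0$ as $m\to\infty$ by the assumption $d_g^{(m)} = o_m(\sqrt{q_m})$, hence $d_g^{(m)}q_m^{3/2} = o_m(q_m^2)$. Since a sum (equivalently, a maximum) of finitely many nonnegative functions that are each $o_m(q_m^2)$ is again $o_m(q_m^2)$, we conclude $\lambda^{(m)} = o_m(q_m^2)$, which is the claim.

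There is essentially no obstacle here; the argument is pure bookkeeping with the little-$o$ notation once Theorem~\ref{S2SS4:largegapgeneral} is available. The only points that deserve a moment's care are (i) that the constants hidden in the $O(\cdot)$ symbols of Theorem~\ref{S2SS4:largegapgeneral} are genuinely uniform in $m$ — which holds because that theorem's proof yields the explicit inequality $\lambda^{(m)}\le d_f^{(m)}q_m + d_g^{(m)}q_m^{3/2}$ — and (ii) that the growth hypotheses on $d_f^{(m)}$ and $d_g^{(m)}$ assumed here include the hypotheses $d_f^{(m)}\ge 1$ and $1\le d_g^{(m)}<p_m$ needed to invoke Theorem~\ref{S2SS4:largegapgeneral}, which they do.
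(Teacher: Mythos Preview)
Your proposal is correct and matches the paper's approach exactly: the paper states this result as an immediate corollary of Theorem~\ref{S2SS4:largegapgeneral} without further argument, and what you have written simply spells out the routine little-$o$ bookkeeping that makes the corollary immediate.
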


Our
 next theorem provides a sufficient condition for the graph $S(k,q)$ to be connected.
\begin{theorem}\label{S2SS4:connectedness}
For $k\ge 3$,   let $S(k,q) = S(k,q;f_3,g_3,\cdots,f_k,g_k)$ and $1
\le d_g < p.$
If at least one of the following two conditions is satisfied, then $S(k,q)$ is connected.
\begin{enumerate}
\item The polynomials $1,X,f_3,\ldots,f_k$ are $\F_q$-linearly independent, and $g_i$ contains a linear term for each $i$, $3\le i\le k$.\\
\item The polynomials $f_3,\ldots,f_k$ are $\F_q$-linearly independent,
and there exists some $j$, $2\le j\le d_g$, such that each polynomial $g_i$, $3\le i\le k$, contains a term $c_{i,j}X^j$ with $c_{i,j}\ne 0$.
\end{enumerate}
\end{theorem}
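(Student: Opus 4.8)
The plan is to translate connectedness into a statement about the largest eigenvalue and then invoke Lemma~\ref{lem}. Recall the standard fact that for a $d$-regular graph the multiplicity of the eigenvalue $d$ equals the number of connected components; since $S(k,q)$ is $q(q-1)$-regular, it is connected precisely when $q(q-1)$ is a simple eigenvalue. By Theorem~\ref{a} together with Lemma~\ref{lem}, the eigenvalue $q(q-1)$ occurs with multiplicity $\#\{w\in\mathbb{F}_q^k : N_w = q\}$, and $N_0=q$ since system~(\ref{S2SS4:N1}) is satisfied by every $u$ when $w=0$. Hence it suffices to prove that, under either hypothesis (1) or (2), the equality $N_w=q$ forces $w=(w_1,\ldots,w_k)=0$. (Equivalently one could argue directly that the connection set of this Cayley graph spans $\mathbb{F}_q^k$ over $\mathbb{F}_p$, but the spectral route is shorter given what is already available.)

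The main tool is that a polynomial in $\mathbb{F}_q[X]$ of degree at most $q-1$ vanishing at every element of $\mathbb{F}_q$ is the zero polynomial. Suppose $N_w=q$, so every $u\in\mathbb{F}_q$ satisfies all equations of~(\ref{S2SS4:N1}). Since each $f_i$ has degree at most $q-1$ and $1,X$ have degree at most $1<q$, every $\mathbb{F}_q$-linear combination of $1,X,f_3,\ldots,f_k$ still has degree at most $q-1$; therefore the polynomial $w_1+w_2X+\sum_{i=3}^k c_{i,1}w_i f_i(X)$ and, for each $j$ with $2\le j\le d_g$, the polynomial $\sum_{i=3}^k c_{i,j}w_i f_i(X)$ are all identically zero. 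Under hypothesis (1), $\mathbb{F}_q$-linear independence of $1,X,f_3,\ldots,f_k$ applied to the first identity yields $w_1=w_2=0$ and $c_{i,1}w_i=0$ for every $i$; since each $g_i$ has a nonzero linear coefficient $c_{i,1}$, we get $w_i=0$ for $3\le i\le k$, so $w=0$. Under hypothesis (2), the identity for the distinguished index $j$ together with $\mathbb{F}_q$-linear independence of $f_3,\ldots,f_k$ yields $c_{i,j}w_i=0$, hence $w_i=0$ for $3\le i\le k$ because $c_{i,j}\ne0$; then the first equation of~(\ref{S2SS4:N1}) collapses to $w_1+uw_2=0$ for all $u\in\mathbb{F}_q$, which forces $w_1=w_2=0$. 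In both cases $w=0$, as desired.

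I do not expect a genuine obstacle. Once the problem is reduced to ``$N_w=q\Rightarrow w=0$'', everything follows from the roots-of-polynomials fact and the two linear-independence hypotheses. The two points meriting a little care are: (a) justifying the reduction, which relies on the multiplicity-of-the-top-eigenvalue fact combined with the already-proved Lemma~\ref{lem}; and (b) verifying that all the relevant polynomials genuinely have degree below $q$, so that ``$q$ roots implies the zero polynomial'' applies. Both are routine.
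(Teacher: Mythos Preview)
Your proof is correct and follows essentially the same approach as the paper: reduce connectedness to simplicity of the top eigenvalue via Lemma~\ref{lem}, then show $N_w=q$ forces $w=0$ using that polynomials of degree at most $q-1$ with $q$ roots are identically zero together with the linear-independence hypotheses. The only cosmetic difference is that the paper packages the last step more abstractly---defining vectors $v_1=(1,0,\ldots,0)$, $v_2=(X,0,\ldots,0)$, $v_i=(c_{i,1}f_i,\ldots,c_{i,d_g}f_i)$ in $(\F_q[X])^{d_g}$ and noting that $\{w:N_w=q\}$ is the kernel of $w\mapsto\sum w_iv_i$, hence has size $q^{k-\rank(v_1,\ldots,v_k)}$---whereas you unpack the same linear-algebra content directly.
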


\begin{proof}
First, notice that the number of components of $S(k,q)$ is equal to the multiplicity of the eigenvalue $q(q-1)$. By Lemma \ref{lem},  this  multiplicity is equal to
$\left|\{ w\in \F_q^k:  N_w = q\}\right|$. As the equality $N_w=q$ is equivalent to the statement that system (\ref{S2SS4:N1}) (with respect to $u$) has $q$ solutions, the set  $\{ w\in \F_q^k:  N_w = q\}$ is a subspace of $\F_q^k$.

Let $v_1 = (1,0,\cdots,0)$, $v_2 = (X,0,\cdots,0)$, and
$
v_i = (c_{i,1}f_i,\cdots, c_{i,d_g}f_i),\ 3\le i\le k
$.
Let ${\rm rank}(v_1,v_2,v_3\cdots,v_k)$ denote the dimension of the subspace generated by  $\{v_1,v_2,v_3\cdots,v_k\}$.  Then, we have,
$$
\left|\{ w\in \F_q^k:  N_w = q\}\right|= q^{k-{\rm rank}(v_1,v_2,v_3\cdots,v_k)}.
$$

It is clear that if one of the two conditions in the statement of the theorem is satisfied, then $v_1,v_2,v_3,\cdots, v_k$ are
$\F_q$-linearly independent, and hence $$\rank (v_1,v_2,v_3,\cdots,v_k) = k.$$ Therefore, the graph $S(k,q)$ is connected.
\end{proof}

We are ready to prove Theorem \ref{aa}.
\begin{proof}[\textbf{Proof of Theorem \ref{aa}}]
 Theorem \ref{aa} is an immediate corollary of Theorem \ref{S2SS4:largegap} and Theorem \ref{S2SS4:connectedness}.
\end{proof}

We conclude this section with an example of families of expanders. Their expansion properties follow from Theorem \ref{aa}.
\begin{example}
{\rm Fix $k\ge 3$. Choose $(b_n)_{n\ge 1}$ and $(c_n)_{n\ge 1}$ to be two increasing sequences of positive real numbers such that $b_n = o(n)$, and $c_n = o(\sqrt{n})$.

Let $(q_m)_{m\geq 1}$ be an increasing
 sequence of prime powers such that $b_{q_1} \ge k$.

Let $f_{3,m},\ldots, f_{k,m}$ be such that
$1,X,f_{3,m},\ldots,f_{k,m}$ are $\mathbb{F}_q$-linearly independent and $1\le d_f^{(m)} < b_q$.
Let $g_{3,m},\ldots,g_{k,m}$ be such that $g_{i,m}(-X) = -g_{i,m}(X)$ for each $i$, the coefficient of $X$ in $g_{i,m}$ is non-zero and $1\le d_g^{(m)} < \min (p_m,c_{q_m})$.
Then the graphs $S(k,q_m) = S(k,q_m; f_{3,m},g_{3,m},\cdots,f_{k,m}, g_{k,m})$, $m\ge 1$, form a family of expanders.}
\end{example}

\section{Spectra of the graphs $S(k,q)$ for $g_i(X)=X^{3}$}\label{ODDEXP}
In this section, we provide some specializations of the graphs $S(k,q)$ for $g_i(X) = X^3$, $3\le i\le k$,
and bound or compute their eigenvalues. Our goal is to prove Theorems \ref{aaaa} and \ref{aaaaa}.

\begin{lemma}\label{cubelemma}
Let $q$ be an odd prime power with $q\equiv 2\mod 3$ and $k\ge 3$. Suppose that
$g_i(X) =X^3$ for any $i$, $3\le i\le k$. For any $w\in \F_q^k$,
let $T_w$ be the number of $u\in\F_q$ such that $f_3(u)w_3+\cdots+f_k(u)w_k = 0$.
Then $\lambda_w$ is either $q(T_w-1)$ or at most $(q-T_w)M_q$, where $M_q = \max\limits_{a,b\in\F_q^*}\varepsilon_{ax^3+bx}\le 2\sqrt{q}$.
\end{lemma}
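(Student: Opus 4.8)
The plan is to feed the eigenvalue formula of Theorem~\ref{a} into a direct computation of the inner sum over $a\in\F_q^*$. Put $L(u):=\sum_{i=3}^{k}f_i(u)w_i$. Since $g_i(X)=X^3$, Theorem~\ref{a} gives $\lambda_w=\sum_{u\in\F_q}z_u$, where $z_u:=\sum_{a\in\F_q^*}\zeta_p^{Tr\left(a(w_1+uw_2)+a^3L(u)\right)}$; restoring the $a=0$ term, $z_u=\varepsilon_{L(u)X^3+(w_1+uw_2)X}-1$, so the whole problem reduces to evaluating these cubic exponential sums. Each such polynomial is odd, so by Lemma~\ref{oddexp} every $z_u$, and hence $\lambda_w$, is real, which legitimizes the inequalities below.

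The arithmetic input I would exploit is that $q\equiv2\bmod3$ forces $\gcd(3,q-1)=1$, so $x\mapsto x^3$ permutes $\F_q$ (and $\F_q^*$). I would then classify a fixed $u$ into three cases. (i) If $L(u)=0$, then $z_u=\varepsilon_{(w_1+uw_2)X}-1$, which by Lemma~\ref{exp_sums} equals $q-1$ when $w_1+uw_2=0$ and $-1$ otherwise. (ii) If $L(u)\ne0$ and $w_1+uw_2=0$, then $z_u=\varepsilon_{L(u)X^3}-1$; the substitution $y=x^3$ turns $\varepsilon_{L(u)X^3}$ into the linear sum $\sum_{y\in\F_q}\zeta_p^{Tr(L(u)y)}=0$, so $z_u=-1$. (iii) If $L(u)\ne0$ and $w_1+uw_2\ne0$, then $L(u)X^3+(w_1+uw_2)X$ has both coefficients nonzero, so $z_u\le M_q-1$ by the definition of $M_q$; the Hasse--Davenport--Weil bound (Theorem~\ref{WeilBound}) applied to this degree-$3$ polynomial with $\gcd(3,q)=1$ also gives $M_q\le2\sqrt q$.

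Summing over $u$ then splits into two cases. If $w_1=w_2=0$, then $w_1+uw_2=0$ for every $u$, so (i) gives $z_u=q-1$ on the $T_w$ values with $L(u)=0$ and (ii) gives $z_u=-1$ on the other $q-T_w$; hence $\lambda_w=T_w(q-1)-(q-T_w)=q(T_w-1)$. If $(w_1,w_2)\ne(0,0)$, then $w_1+uw_2=0$ has at most one root in $\F_q$, so among the $T_w$ values with $L(u)=0$ at most one contributes $q-1$ and the rest contribute $-1$, for a subtotal of at most $q-T_w$; among the $q-T_w$ values with $L(u)\ne0$ each $z_u$ is at most $M_q-1$ by (ii) or (iii). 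Adding the two subtotals gives $\lambda_w\le(q-T_w)+(q-T_w)(M_q-1)=(q-T_w)M_q$.

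This is mostly bookkeeping, and I do not expect a serious obstacle; the only point needing care is that in the $(w_1,w_2)\ne(0,0)$ branch a value $u_0$ with $w_1+u_0w_2=0$ but $L(u_0)\ne0$ contributes $z_{u_0}=-1$, which I bound by $M_q-1$, and this uses $M_q\ge0$ --- a harmless fact, since averaging over $b$ gives $\sum_{b\in\F_q^*}\varepsilon_{X^3+bX}=q$, whence $M_q\ge q/(q-1)>0$. The conceptual role of the hypothesis $q\equiv2\bmod3$ is precisely cases (ii) and (iii): cubing being a bijection is what collapses $\varepsilon_{L(u)X^3}$ to $0$ and what makes the sums in (iii) genuine cubic Weil sums controlled by $M_q\le2\sqrt q$.
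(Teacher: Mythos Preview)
Your argument is correct and follows essentially the same route as the paper's proof: both compute the inner sum over $a$ as a cubic exponential sum, use the bijectivity of $x\mapsto x^3$ when $q\equiv 2\pmod 3$ to collapse the pure-cubic sums, and bound the mixed sums by $M_q$. The only difference is organizational --- the paper splits first on the shape of $w$ (namely $w_1=w_2=0$, $w_1\ne 0=w_2$, $w_2\ne 0$) and within the last case further subdivides according to whether $F(-w_1/w_2)=0$, whereas you classify each $u$ by the pair $\bigl(L(u),\,w_1+uw_2\bigr)$ and then split once on whether $(w_1,w_2)=(0,0)$; your bookkeeping is a bit more uniform, and your explicit verification that $M_q>0$ (via the average $\sum_{b\in\F_q^*}\varepsilon_{X^3+bX}=q$) fills a small point the paper leaves implicit.
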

\begin{proof}
By (\ref{S2SS4:eignoclosed}), we have the following,
$$
\lambda_w = \sum\limits_{a\in\mathbb{F}_q^*,u\in\mathbb{F}_q} \zeta_p^{\ Tr\bigl(a(w_1+uw_2)+a^3\sum\limits_{i=3}^k f_i(u)w_i\bigr)},
$$
for any $w = (w_1,\cdots,w_k)$. Let
$F(X) = f_3(X)w_3+\cdots+f_k(X)w_k$.
\begin{enumerate}
\item[Case 1:] For
 $w$ of the form $(0,0,w_3,\cdots,w_k)$, we have:

\begin{align*}
\lambda_w &= \sum\limits_{\substack{u\in\F_q\\F(u) = 0}}\sum\limits_{a\in\F_q^*} \zeta_p^{\ Tr\bigl( a^3F(u)\bigr)} + \sum\limits_{\substack{u\in\F_q\\F(u) \ne 0}}\sum\limits_{a\in\F_q^*} \zeta_p^{\ Tr\bigl( a^3F(u)\bigr)}\\
                &= (q-1)T_w +\sum\limits_{\substack{u\in\F_q\\F(u) \ne 0}}\sum\limits_{a\in\F_q^*} \zeta_p^{\ Tr\bigl( a^3F(u)\bigr)}.
\end{align*}
Since $q\equiv 2 $ mod 3,  it follows that $\gcd(q-1,3)= 1$,  and $a\mapsto a^3$ defines  a bijection  of $\F_q$.  Therefore  the above term $\sum\limits_{a\in\F_q^*} \zeta_p^{\ Tr\bigl( a^3F(u)\bigr)}$ equals  $-1$.
 Hence,
$$\lambda_w = (q-1)T_w - (q-T_w) = q(T_w-1).$$
\item[Case 2:] For those $w$ of the form $(w_1,0,w_3,\cdots,w_k)$ with $w_1\ne 0$, we have,
\begin{align*}
\lambda_w &= \sum\limits_{\substack{u\in\F_q\\F(u) = 0}}\sum\limits_{a\in\F_q^*} \zeta_p^{\ Tr\bigl( aw_1+a^3F(u)\bigr)} + \sum\limits_{\substack{u\in\F_q\\F(u) \ne 0}}\sum\limits_{a\in\F_q^*} \zeta_p^{\ Tr\bigl( aw_1+a^3F(u)\bigr)}\\
                &= -T_w +\sum\limits_{\substack{u\in\F_q\\F(u) \ne 0}}(\varepsilon_{w_1a+F(u) a^3}-1) \\
                &= -q + \sum\limits_{\substack{u\in\F_q\\F(u) \ne 0}}\varepsilon_{w_1a+F(u) a^3}\\
                &\le -q +\sum\limits_{\substack{u\in\F_q\\F(u)\ne 0}} M_q \hskip 20mm\text{(by Lemma \ref{oddexp}, $\varepsilon_{w_1a+F(u)a^3}$ is real)}\\
                &\le -q + (q-T_w)M_q
                < (q-T_w)M_q.
\end{align*}

\item[Case 3:] For those $w$ of the form $w = (w_1,w_2,w_3,\cdots,w_k)$ with $w_2\ne 0$, we have

\begin{align}
\lambda_w & =  \sum\limits_{\substack{u\in\F_q\\F(u) = 0\\w_1+uw_2 = 0}}\sum\limits_{a\in\F_q^*} \zeta_p^{\ Tr( aw_1+auw_2)} + \sum\limits_{\substack{u\in\F_q\\F(u) = 0\\w_1+uw_2 \ne 0}}\sum\limits_{a\in\F_q^*} \zeta_p^{\ Tr( aw_1+auw_2)}\label{12}\\
                &\quad\quad +\sum\limits_{\substack{u\in\F_q\\F(u) \ne 0}}\sum\limits_{a\in\F_q^*} \zeta_p^{\ Tr\bigl( a(w_1+uw_2)+a^3F(u)\bigr)}.\label{3}
\end{align}

If $F(-w_1/w_2) = 0$, then the number of $u\in\F_q$ such that $F(u) = 0$ and $w_1+uw_2 = 0$ is 1, and hence,
\begin{align*}
\lambda_w &= (q-1)-(T_w-1)+\sum\limits_{\substack{u\in\F_q\\F(u) \ne 0}}\sum\limits_{a\in\F_q^*} \zeta_p^{\ Tr\bigl( a(w_1+uw_2)+a^3F(u)\bigr)}\\
                 &=q-T_w+\sum\limits_{\substack{u\in\F_q\\F(u) \ne 0\\w_1+uw_2\ne 0}}(\varepsilon_{(w_1+uw_2)a+F(u) a^3}-1)\\
                 &=\sum\limits_{\substack{u\in\F_q\\F(u) \ne 0\\w_1+uw_2\ne 0}}\varepsilon_{(w_1+uw_2)a+F(u) a^3}\\
                 &\le (q-T_w)M_q.
\end{align*}
Now assume that $F(-w_1/w_2)\ne 0$. Then, $w_1+uw_2 \ne 0$ if $F(u) = 0$.
 Then the first double sum in (\ref{12}) has no terms, the second double sum in (\ref{12}) is equal to $T_w(-1)$, and splitting the  double sum in  (\ref{3}) into two double sums, we obtain:

\begin{align*}
\lambda_w         & = -T_w+\sum\limits_{\substack{u\in\F_q\\F(u) \ne 0\\w_1+uw_2 = 0}}\sum\limits_{a\in\F_q^*} \zeta_p^{\ Tr\bigl( a(w_1+uw_2)+a^3F(u)\bigr)} + \sum\limits_{\substack{u\in\F_q\\F(u) \ne 0\\w_1+uw_2 \ne 0}}\sum\limits_{a\in\F_q^*} \zeta_p^{\ Tr\bigl( a(w_1+uw_2)+a^3F(u)\bigr)}\\
                 & = -T_w+ \sum\limits_{a\in\F_q^*} \zeta_p^{\ Tr\bigl(a^3F(u)\bigr)} + \sum\limits_{\substack{u\in\F_q\\F(u) \ne 0\\w_1+uw_2 \ne 0}}\sum\limits_{a\in\F_q^*} \zeta_p^{\ Tr\bigl( a(w_1+uw_2)+a^3F(u)\bigr)}\\
                 & = -T_w - 1 + \sum\limits_{\substack{u\in\F_q\\F(u) \ne 0\\w_1+uw_2 \ne 0}} (\varepsilon_{(w_1+uw_2)a+F(u) a^3}-1)\\
                 & = -q + \sum\limits_{\substack{u\in\F_q\\F(u) \ne 0\\w_1+uw_2 \ne 0}} \varepsilon_{(w_1+uw_2)a+F(u) a^3}\\
                 & \le -q + (q-T_w-1)M_q <    (q-T_w)M_q.
\end{align*}
\end{enumerate}
As $q \equiv 2\mod 3$,  we have $\gcd(3,q) = 1$. By Theorem \ref{WeilBound}, $M_q\le 2\sqrt{q}$, and the lemma is proven.
\end{proof}

Now we prove Theorem \ref{aaaa}, where $f_i(X) =X^{i-1}$, for any $3\le i\le k$. In this case, we are able to determine their second largest eigenvalues.
\begin{proof}[\textbf{Proof of Theorem \ref{aaaa}}]
Since $3\le k\le q+1$, it follows that $X^2,X^3,\cdots,X^{k-1}$ are $\F_q$-linearly independent, and hence $S(k,q)$ is connected by
 Theorem \ref{S2SS4:connectedness}. For any $w = (w_1,\cdots,w_k) \in \F_q^k$, let $F(X) = X^2w_3+X^3w_4+\cdots+X^{k-1}w_k = X^2(w_3+Xw_4+\cdots+X^{k-3}w_k)$,
 which implies that $T_w$ (defined in the statement of Lemma~\ref{cubelemma}) is either $q$ or between $1$ and $k-2$.  By Lemma~\ref{cubelemma}, we
 have that if $\lambda_w$ is not $q(T_w-1)$, then it is at most $(q-T_w)M_q\le (q-1)M_q$. Therefore, we obtain:
 $$\lambda_2\big(S(k,q)\big) \le \max\{ q(k-3), (q-1)M_q\}.$$
Moreover,
if $k\ge 4$, then the above inequality becomes equality. Indeed, for any $w\in\F_q^k$ of the form $w =(0,w_2,0,w_4,0,\cdots,0)$, where $w_2,w_4\ne 0$, the following holds:
\begin{align*}
\lambda_w &= \sum\limits_{a\in\F_q^*,u\in\F_q} \zeta_p ^{\ Tr(auw_2 + a^3u^3w_4)}\\
                 & = \sum\limits_{x\in\F_q} \sum\limits_{\substack{a\in\F_q^*,u\in\F_q\\au = x}} \zeta_p^{\ Tr(w_2x + w_4x^3)}\\
                 & = \sum\limits_{x\in\F_q} (q-1)\zeta_p^{\ Tr(w_2x + w_4x^3)}\\
                 & = (q-1) \varepsilon_{w_2x+w_4x^3}.
\end{align*}
This implies that $$\max\limits_{\substack{w = (0,w_2,0,w_4,0,\cdots,0)\\w_2,w_4\ne 0}} \{\lambda_w\} = (q-1)M_q.$$ Therefore, we have
$\lambda_2 (S(k,q)) = \max\{q(k-3),(q-1)M_q\}$.
As $q\equiv 2 \pmod{3}$,  by Theorem \ref{WeilBound}, $M_q\le 2\sqrt{q}$.
\end{proof}

\begin{proof}[\textbf{Proof of Theorem \ref{aaaaa}}]
Since $3\le k\le e+2$,
 it follows that $X^p, \cdots, X^{p^{k-2}}$ are $\F_q$-linearly independent, and hence $S(k,q)$ is connected by Theorem \ref{S2SS4:connectedness}. For any $w = (w_1,\cdots,w_k) \in \F_q^k$, let $F(X) = X^pw_3+\cdots + X^{p^{k-2}}w_k = (X^p)w_3+\cdots + (X^p)^{p^{k-3}}w_k = Yw_3 + \cdots + Y^{p^{k-3}}w_k$ where $Y =X^p$. Since  $a\mapsto a^p$ defines  a bijection  on $\F_q$, it implies that
$T_w$ (defined here as the number of roots of $F(X)$ in $\F_q$),  is either $q$ or at most $p^{k-3}$. The statement of the theorem then follows from Lemma~\ref{cubelemma}.
\end{proof}

\section{Concluding remarks}\label{CONREM}
In this section, we make some remarks on several specializations of $S(k,q)$ considered in Section \ref{ODDEXP}.
\begin{rk}\label{NonBip}
{\rm As we mentioned in Section \ref{Intro}, for every $q$-regular bipartite graph $\Gamma$, every eigenvalue of $\Gamma^{(2)}$ should be at least $-q$. For graphs $S(3,q; x^2,x^3)$ for prime $q$ between 5 and 19, and for graphs $S(4,q; x^2,x^3,x^3,x^3)$ for prime $q$ between 5 and 13, our computations show that their smallest eigenvalues are strictly less than $-q$. This implies that these graphs are not distance two graphs of any $q$-regular bipartite graphs.
}
\end{rk}
\begin{rk}
{\rm
In Section \ref{ODDEXP}, we discussed the graphs $S(k,q)$ with $g_i(X) = X^3$.
Now assume that $n\ge 1$, and $g_i(X) = X^{2n+1}$ for all $i$, $3\le i\le k$.
For these graphs, Lemma \ref{cubelemma} can be generalized as follows:

\textit{Let $q$ be an odd prime power with $q\not\equiv 1\mod (2n+1)$ and $(2n+1,q) = 1$. For any $w\in\F_q^k$, let $N_w$ be the number of $u\in\F_q$ such that $w_3f_3(u)+\cdots+w_kf_k(u)=0$. Then $\lambda_w$ is either $q(N_w-1)$ or at most $2n(q-N_w)\sqrt{q}$.}

In the case when $3\le k\le q+1$, $f_i(X) =X^{i-1}$ and $g_i(X) = X^{2n+1}$ for all $i$, $3\le i\le k$, the conclusion of Theorem \ref{aaaa} can be stated in a slightly weaker form:
$$
\lambda_2\bigl(S(k,q)\bigr) \le \max\{q(k-3),2n(q-1)\sqrt{q}\}.
$$
Actually, for fixed $q$, if $k$ is sufficiently large, $\lambda_2\bigl(S(k,q)\bigr) = q(k-3)$ for all $n\ge 1$. }

\end{rk}

\begin{rk}{\rm
The quantity $M_q =\max\limits_{a,b\in\F_q^*} \varepsilon_{ax^3+bx}$ in Theorem \ref{aaaa} and Theorem \ref{aaaaa} is at most $2\sqrt{q}$ by Weil's bound. From the computational results, $M_q \ge 2\sqrt{q}-2$ for $q\le 1331$. Interestingly, when $q = 5^3$ or $5^5$, the Weil's bound is tight.}
\end{rk}

\begin{rk}{\rm
Let $k\geq 3$ be an integer and let $f_i,g_i\in \F_q[X]$, $3\leq i\leq k+1$, be $2k-2$ polynomials of degree at most $q-1$ such that $g_i(-X)=-g_i(X)$ for each $i$, $3\leq i\leq k+1$. If $S(k+1,q)=S(k+1,q;f_3,g_3,\dots,f_k,g_k,f_{k+1},g_{k+1})$ and $S(k,q)=S(k,q;f_3,g_3,\dots,f_k,g_k)$, then it is not hard to show that $S(k+1,q)$ is a $q$-cover of $S(k,q)$ (see, e.g.,  \cite[Section 6]{HLW}). This implies that the spectrum of $S(k+1,q)$ is a submultiset  the spectrum of $S(k,q)$ and, in particular, $\lambda_2(S(k+1,q))\geq \lambda_2(S(k,q))$.

Interestingly, in the case when $f_i(X) = X^{i-1}$ and $g_i(X) = X^3$ for each $i\ge 3$, we actually have equality in the inequality above for $(q,k)$ whenever $\displaystyle{k < \frac{q-1}{q}M_q+2}$ (immediate from Theorem \ref{aaaa}).
}
\end{rk}

\section{Acknowledgment}
The work of the first author was supported by the NSF grant DMS-1600768, and of the second author by the Simons Foundation grant \#426092.


\begin{thebibliography}{99}

\bibitem{ALM85} N. Alon and V. D. Milman,  $\lambda_1$,	 isoperimetric inequalities for graphs, and superconcentrators,  \textit{J. Combin. Theory Ser. B} \textbf{38} 1 (1985), 73--88.
\bibitem{AL86} N. Alon, Eigenvalues and expanders, \textit{Combinatorica} \textbf{6} 2 (1986), 83--96.
\bibitem {Babai} L. Babai, Spectra of Cayley graphs, {\em J. Combin. Theory Ser. B} {\bf 27} (1979), 180--189.


\bibitem{Bol98}  B. Bollob\'as,  {\em Modern Graph Theory}, Springer-Verlag New York Inc. (1998).

\bibitem{BH11} A. E. Brouwer and W. H. Haemers, \textit{Spectra of Graphs}, Springer (2011).

\bibitem{Liping15} X. Cao, M. Lu, D. Wan, L. -P. Wang and Q. Wang, Linearized Wenger graphs,
                   {\em Discrete Math.} \textbf{338}, (2015), 1595--1602.

\bibitem{CiLaLi14} S. M. Cioab\u{a}, F.~Lazebnik and W.~Li, On the spectrum of Wenger graphs, {\em J. of Combin. Theory Ser. B}  \textbf{107}, (2014), 132--139.

\bibitem {DSV} G. Davidoff, P. Sarnak and A. Valette, {\em Elementary Number Theory, Group Theory and Ramanujan Graphs}, London Mathematical Society Student Texts, 55, Cambridge University Press (2003).


\bibitem{Dod84} J. Dodziuk, Difference equations, isoperimetric inequality and transience of certain random walks, \textit{Trans. Amer. Math. Soc.} \textbf{284} 2 (1984), 787--794.

\bibitem {HLW} S. Hoory, N. Linial and A. Wigderson, Expanders and their applications, {\em Bull. Amer. Math. Soc.} {\bf 43} (2006), 439--561.

\bibitem{Isaac} I. M. Isaacs, {\em Character theory of finite groups}, Academic Press  (1978).

\bibitem{KS11} M. Krebs and A. Shaheen, \textit{Expander families and Cayley graphs, a beginner's guide}, Oxford Univ. Press (2011).

\bibitem {LaSuWa17}   F. Lazebnik, S. Sun and Y.Wang,
         Some Families of Graphs, Hypergraphs and Digraphs Defined by Systems of Equations: A Survey.
         {\em Lecture Notes of Seminario Interdisciplinare di Matematica} \textbf{14} (2017), 105--142.

\bibitem {LazUst} F. Lazebnik and V. Ustimenko, New examples of graphs without small cycles and of large size, {\em European J. Combin.} {\bf 14} (1993), 445--460.

\bibitem{LW01}    F.~Lazebnik and A.~J.~Woldar, General properties of some families of graphs defined by systems of equations, {\em J. Graph Theory} \textbf{38}
   (2001), 65--86.

\bibitem{LazUstDkq} F. Lazebnik, V.A. Ustimenko, Explicit construction of graphs with arbitrary large
girth and of largh size, {\em Discrete Applied Math.} {\bf 60}  (1997), 275–-284.

\bibitem {LiLih} Y. Li and K.-W. Lih, Multi-color Ramsey numbers of even cycles, {\em European J.~Combin.} {\bf 30} (2009), 114--118.

\bibitem{LN97} R. Lidl and H. Niederreiter, {\em Finite Fields}, Encyclopedia of Mathematics and its
Applications, Cambridge Univ. Press, 2nd Edition (1997).


\bibitem {Lovasz} L. Lov\'{a}sz, Spectra of graphs with transitive groups, {\em Period. Math. Hungar.} {\bf 6} (1975), 191--195.


\bibitem{Mo89} B. Mohar, Isoperimetric numbers of graphs, \textit{J. Combin. Theory Ser. B}  {\bf 47} 3 (1989), 274--291.

\bibitem{MoorSunWil16}  G. E.  Moorhouse, S. Sun, J. Williford, The Eigenvalues of  Graphs $D(4, q)$, {\em J.~Combin. Theory Ser. B} {\bf 125} (2017) 1--20.

\bibitem{Nil91} A. Nilli, On the second eigenvalue of a graph, \textit{Discrete Math.} {\bf 91} 2 (1991), 207--210.

\bibitem{thesis} S. Sun, On some families of algebraically defined graphs, {\em Ph. D. Thesis}, University of Delaware, (2017).

\bibitem {YanLiu17}  H. Yan, C. Liu,  A note on the spectrum of linearized Wenger graphs, {\em Discrete Math.}
{\bf 340} 5 (2017), 1050--1053.



\end{thebibliography}
\end{document}